\documentclass[11pt]{article}

\usepackage[margin=1in]{geometry}
\usepackage{amsmath,amssymb,amsthm,mathtools}
\usepackage{booktabs}
\usepackage{enumitem}
\usepackage{longtable}
\usepackage{placeins}
\usepackage{tikz}
\usepackage[colorlinks=true,linkcolor=blue,citecolor=blue,urlcolor=blue]{hyperref}

\usetikzlibrary{calc,positioning}

\newtheorem{theorem}{Theorem}[section]
\newtheorem{proposition}{Proposition}[section]
\newtheorem{lemma}{Lemma}[section]
\newtheorem{corollary}{Corollary}[section]
\theoremstyle{definition}
\newtheorem{definition}{Definition}[section]
\newtheorem{example}{Example}[section]
\newtheorem{remark}{Remark}[section]

\newcommand{\kap}{\kappa}
\newcommand{\drawsmallcat}[3]{%
\begin{scope}[scale=.55]
  \foreach \a [count=\i from 0] in {#3} {
    \node[vertex] (v\i) at (.82*\i,0) {};
    \ifnum\i>0
      \pgfmathtruncatemacro{\prev}{\i-1}
      \draw (v\prev)--(v\i);
    \fi
  }
  \foreach \a [count=\i from 0] in {#3} {
    \ifnum\a>0
      \foreach \j in {1,...,\a} {
        \pgfmathsetmacro{\mid}{(\a+1)/2}
        \ifnum\i=0
          \pgfmathsetmacro{\ang}{180+(\j-\mid)*22}
        \else
          \pgfmathtruncatemacro{\last}{#2-1}
          \ifnum\i=\last
            \pgfmathsetmacro{\ang}{(\j-\mid)*22}
          \else
            \pgfmathsetmacro{\ang}{90+(\j-\mid)*22}
          \fi
        \fi
        \node[vertex] (l\i-\j) at ($(v\i)+(\ang:.62)$) {};
        \draw (v\i)--(l\i-\j);
      }
    \fi
  }
  \node[rectangle,draw=none,fill=none,font=\scriptsize] at ({.41*(#2-1)},-.95) {#1};
\end{scope}%
}

\title{Positive-Curvature Discrete Einstein Metrics on Trees}
\author{Haoxuan Cheng\thanks{School of Mathematical Sciences, Fudan University. Email: \texttt{hxcheng25@m.fudan.edu.cn}.}}
\date{}

\begin{document}
\maketitle

\begin{abstract}
For a weighted tree, the Lin--Lu--Yau Ricci curvature admits an explicit
formula in terms of the edge weights. Consequently, the constant-curvature
equation is equivalent to an eigenvalue problem for an edge-indexed Ricci
matrix $R_T$. Building on the spectral characterization of discrete Einstein
metrics on trees, we classify all finite trees whose discrete Einstein metric
has positive curvature, equivalently all trees satisfying
$\lambda_{\max}(R_T)<0$. For caterpillars with spine order $m\ge 12$, this
occurs precisely for the endpoint families $T_m(a,0,\ldots,0,b)$ with
$1\le a,b\le 3$ and $(a,b)\ne(3,3)$. The remaining cases $3\le m\le 11$ are
settled by an exact finite verification using rational characteristic
polynomials and Sturm root counts. We also determine the zero level set
$\lambda_{\max}(R_T)=0$: among caterpillars, it consists of the stable family
$(3,0,\ldots,0,3)$ together with nine exceptional short-spine caterpillars,
while $S_3^2$ is the unique non-caterpillar zero example.
\end{abstract}

\section{Introduction}

The problem of finding Einstein metrics is one of the central problems in
Riemannian geometry \cite{besse}. A Riemannian metric $g$ is Einstein if
\[
  \operatorname{Ric}(g)=\kap g
\]
for a constant $\kap$. In the smooth setting, the Einstein equation is closely
related to the fixed points of the normalized Ricci flow
\cite{hamilton,chow-lu-ni}.

Several discrete analogues of Ricci curvature have been introduced on graphs,
including Ollivier Ricci curvature and the Lin--Lu--Yau curvature
\cite{ollivier,lly}; see also \cite{bauer-jost-liu,jost-liu}. These curvatures
are connected with analytic and geometric properties such as spectrum,
diameter, and rigidity; see for instance
\cite{lin-yau-eigen,paeng,munch-general}. They have also led to applications
in graph coloring, community detection, and network geometry, as well as to
structural results for idleness functions, graph products, and special graph
families; see
\cite{lin-yau-coloring,bourne-idleness,munch-general,cushing-products,cushing-cubic,
samal-networks,sia-community,gosztolai-dynamical,hoorn-rgg,
hehl-lly-one,hehl-positive-regular}.

Let $G=(V,E,w)$ be a finite weighted graph with positive edge weight
$w:E\to\mathbb R_{>0}$. For an edge $xy\in E$, denote by $\kap_{xy}$ its
Lin--Lu--Yau Ricci curvature. We call $(G,w)$ a discrete Einstein graph if
$\kap_{xy}$ is independent of $xy$. For trees, the curvature admits a closed
formula. After multiplying by the edge weights, the discrete Einstein condition
therefore becomes a finite-dimensional linear eigenvalue problem.

In addition to static curvature, one can also study geometric evolution on
graphs. For weighted graphs, Ollivier Ricci flow has been introduced and
analyzed in \cite{bai-ricci-sum,bai-ollivier-flow}, while for trees a detailed
long-time analysis was recently obtained in \cite{baihua-ricci-flow}. In the
recent work of Bai and Hua \cite{baihua-dem}, the associated edge-indexed
matrix $R_T$ was introduced. We call $R_T$ the Ricci matrix of the tree.

Perron--Frobenius theory yields a unique positive eigenvector of $R_T$, up to
scaling. This eigenvector is precisely the discrete Einstein metric on the
tree. Thus every finite tree admits a unique discrete Einstein metric, while
the sign of its curvature is governed by the largest eigenvalue of $R_T$.
Equivalently, the positive-curvature problem becomes the problem of classifying
finite trees satisfying
\[
  \lambda_{\max}(R_T)<0.
\]

The starting point of the present paper is the implication proved in
\cite{baihua-dem}:
\[
  \lambda_{\max}(R_T)<0
  \quad\Longrightarrow\quad
  T\text{ is a caterpillar}.
\]
That paper also established the spectral characterization of discrete Einstein
metrics on trees, several attachment monotonicity results, and the stable zero
family obtained from the symmetric double star; see
\cite[Theorem 1.2, Propositions 3 and 7, Proposition 8]{baihua-dem}. The
converse of the implication is false. The natural next question, posed in
\cite{baihua-dem}, is therefore to determine exactly which caterpillars satisfy
$\lambda_{\max}(R_T)<0$, or equivalently, which trees carry a
positive-curvature discrete Einstein metric.

For a caterpillar, we write $T_m(a)$ for the tree with spine order $m$ and
leaf-count sequence
\[
  a=(a_1,\ldots,a_m),\qquad a_1,a_m\ge1,\quad a_i\ge0,
\]
where $a_i$ denotes the number of leaves attached to the $i$th spine vertex.

Our main result is the following theorem.

\begin{theorem}\label{thm:intro-main}
Let $T$ be a finite tree.
\begin{enumerate}[label=(\roman*)]
  \item The discrete Einstein metric on $T$ has positive curvature if and only
  if $\lambda_{\max}(R_T)<0$. If $\lambda_{\max}(R_T)<0$, then $T$ is a
  caterpillar.

  \item Let $T=T_m(a)$ be a caterpillar. Then $\lambda_{\max}(R_T)<0$ if and
  only if one of the following holds:
  \begin{enumerate}[label=(\alph*)]
    \item $m=1$;
    \item $m=2$ and $(a_1-1)(a_2-1)<4$;
    \item $3\le m\le11$ and $a$ lies in the downward closure of the maximal
    elements listed in Table~\ref{tab:maximal-negative};
    \item $m\ge12$ and
    \[
      a=(a_1,0,\ldots,0,a_m),\qquad
      1\le a_1,a_m\le3,\qquad
      (a_1,a_m)\ne(3,3).
    \]
  \end{enumerate}

  \item One has $\lambda_{\max}(R_T)=0$ if and only if one of the following
  holds:
  \begin{enumerate}[label=(\alph*)]
    \item $T=S_3^2$;
    \item $T=T_m(a)$ is a caterpillar with
    \[
      a=(3,0,\ldots,0,3);
    \]
    \item $T=T_m(a)$ is a caterpillar and, up to reversal, $a$ is one of the
    parameters listed in Table~\ref{tab:zero-exceptions}.
  \end{enumerate}
\end{enumerate}
\end{theorem}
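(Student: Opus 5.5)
The plan has four steps: set up the Ricci matrix and the tree facts from \cite{baihua-dem}, prove a monotonicity principle, settle the long-spine case (ii)(d), and finish the short spines and the zero set (iii) by computation.

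\textbf{Step 1 (setup and part (i)).} I take the LLY curvature of a tree edge $xy$ from its closed form in terms of the weights of edges at $x$ and $y$. Multiplying by $w_{xy}$ turns the Einstein condition $\kap_{xy}=\kap$ into $R_T w=\kap w$. The off-diagonal entries of $R_T$ (between edges sharing a vertex) are nonnegative and the edge-adjacency is connected, so a suitable shift of $R_T$ is irreducible nonnegative. Perron--Frobenius then gives a unique positive eigenvector $w$, with eigenvalue $\lambda_{\max}(R_T)$. Hence $\kap=\lambda_{\max}$ up to the sign convention fixed by the normalization, which gives the first equivalence in (i). The caterpillar implication is \cite[Theorem 1.2]{baihua-dem}.

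\textbf{Step 2 (monotonicity and reduction to maximal elements).} I use the attachment monotonicity of \cite[Propositions 3 and 7]{baihua-dem}: adding a leaf to a spine vertex, or lengthening the spine, does not decrease $\lambda_{\max}$. Consequently the set of leaf sequences $a$ with $\lambda_{\max}(R_{T_m(a)})<0$ is a down-set in the componentwise order. I will check that the monotonicity is strict wherever needed, so that zero examples are always maximal boundary points.

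The small cases are direct:
\begin{itemize}
\item For $m=1$ (stars), I compute $R_T$ by hand; it is a multiple of a rank-one perturbation of the identity, with $\lambda_{\max}<0$.
\item For $m=2$ (double stars), I use the symmetry within each leaf class. This reduces to a $3\times3$ quotient matrix whose characteristic polynomial has a root at $0$ exactly when $(a_1-1)(a_2-1)=4$. That gives (ii)(b) and the double-star zero cases.
\end{itemize}

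\textbf{Step 3 (long spines, the main obstacle).} For $m\ge12$ I must show two things. First, any interior leaf ($a_i\ge1$ for some $1<i<m$) forces $\lambda_{\max}\ge0$, as does $a_1\ge4$. Second, the families $(a,0,\dots,0,b)$ with $a,b\le3$ and $(a,b)\ne(3,3)$ are negative, with $(3,0,\dots,0,3)$ exactly zero.

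For the path part, I apply leaf-class symmetry again and reduce to a tridiagonal (Jacobi) matrix along the spine edges, with boundary corrections at both ends. Solving the three-term recurrence explicitly, with constant coefficients on the bare spine, turns $\det(R-\lambda)$ at $\lambda=0$ into a transfer-matrix product. I then show:
\begin{itemize}
\item The zero family $(3,\dots,3)$ corresponds to an explicit positive eigenvector with eigenvalue $0$; this is the stable zero family of \cite[Proposition 8]{baihua-dem}.
\item Endpoint counts below $3$ make the boundary test function strictly subharmonic, which gives $\lambda_{\max}<0$ by a Collatz--Wielandt bound.
\end{itemize}

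For the positivity results, I use Collatz--Wielandt in the other direction. I construct a nonnegative test vector $w$ with $R_Tw\ge0$ (componentwise, and nonzero) on the minimal bad configurations: a single interior leaf at position $i$, and $a_1=4$. Monotonicity then extends the conclusion to everything larger. The difficulty is uniformity in $m$ and in the position $i$ of the interior leaf. I plan to use the transfer-matrix eigenvalues, which are hyperbolic, to write the test vectors in closed form. Then $m\ge12$ is exactly the threshold beyond which the exponentially decaying error terms are dominated, and the finitely many short cases are left to Step 4.

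\textbf{Step 4 (finite verification and zero set).} For $3\le m\le 11$, monotonicity bounds the relevant $a$: any $a_i$ beyond a small bound already gives $\lambda_{\max}\ge0$ by the star and double-star estimates. So only finitely many sequences need checking. For each one I compute the characteristic polynomial of the quotient matrix over $\mathbb Q$ and use a Sturm sequence to count roots in $[0,\infty)$. This certifies the maximal elements in Table~\ref{tab:maximal-negative} and the zeros in Table~\ref{tab:zero-exceptions}.

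Part (iii) then follows from three facts:
\begin{itemize}
\item By strict monotonicity, a zero example must sit on the boundary of the negative down-set.
\item For non-caterpillars, $\lambda_{\max}\ge0$ by \cite{baihua-dem}, and every non-caterpillar contains $S_3^2$. Strict monotonicity under attachment therefore leaves $S_3^2$ as the only non-caterpillar with $\lambda_{\max}=0$.
\item For $m\ge12$, Step 3 shows the only zeros are $(3,0,\dots,0,3)$.
\end{itemize}
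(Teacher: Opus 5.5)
There are two genuine gaps: part (iii) goes wrong at $m=12$, and the key computation behind (ii)(d) is missing and rests on a false picture. The rest of your outline matches the paper: Perron--Frobenius, the down-set from leaf attachment, explicit $m=1,2$, a tridiagonal reduction on the spine for $m\ge12$, an exact Sturm check, and zeros lying on the first nonnegative boundary. Your Collatz--Wielandt idea for the negative endpoint families also works and is shorter than the paper's Sylvester computation. Put weight $3$ on spine edges and $1$ on pendant edges. Then $R_Tw=0$ except $(R_Tw)_{s_1}=(a-3)/(a+1)$ and $(R_Tw)_{s_{m-1}}=(b-3)/(b+1)$.

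\textbf{Part (iii) at $m=12$.} Step 3 only gives $\lambda_{\max}\ge0$ when there is an interior leaf. From this you conclude that for $m\ge12$ the only zeros are $(3,0,\ldots,0,3)$, and your exact check stops at $m=11$. That conclusion is false. $T_{12}(1,0,\ldots,0,1,1)$ has the positive null vector with spine weights $(3,5,\ldots,21,15)$ and pendant weights $1,9,5$, so $\lambda_{\max}=0$. This is the $m=12$ entry of Table~\ref{tab:zero-exceptions}, and your argument never produces it.

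The threshold is sharp. In the paper, $\det S_{m,i}$ is a nonzero multiple of $B_{m,i}=8i^2-8im-8i+13m+20$, and $B_{m,2}=B_{m,m-1}=36-3m$ vanishes at $m=12$. Interior defects are strictly positive only for $m\ge13$, where $B_{m,i}\le36-3m<0$. So the zero classification needs that strict statement for $m\ge13$, plus an exact verification that includes $m=12$.

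\textbf{The core of (ii)(d).} You need $\lambda_{\max}\ge0$ for $T_m(\eta_i)$ uniformly in $2\le i\le m-1$ and $m\ge12$, and for $(4,0,\ldots,0,1)$. You never construct the test vectors for this, and the mechanism you propose is wrong. At $\lambda=0$ the bare-spine equation is $x_{j-1}-2x_j+x_{j+1}=0$. Its transfer matrix has the double eigenvalue $1$, so it is parabolic, not hyperbolic: solutions are affine, as the null vector above shows. There are no exponentially decaying terms to dominate; the threshold comes from an exact identity that is linear in $m$.

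Moreover, $\lambda_{\max}(R_{T_{12}(\eta_2)})=0$. Any $w\ge0$ with $R_Tw\ge0$ there therefore has $w^{\mathsf T}R_Tw\ge0$ against a negative semidefinite matrix, so it must be the exact null vector. No estimate with slack can reach $m=12$. You need an exact computation, such as the paper's:
\begin{enumerate}
\item eliminate the three pendant orbits, which form a negative definite block;
\item compute the determinant of the tridiagonal Schur complement through the affine recurrence $U_k=2U_{k-1}-U_{k-2}$;
\item conclude by additivity of inertia.
\end{enumerate}
For $a_1\ge4$, the analogous affine quantity $r_k=1+k\rho_a/3$ becomes negative.

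\textbf{A smaller point.} The a priori bound in Step 4 cannot come from star or double-star estimates. Stars are always negative, and $T_2(1,k)$ is negative for every $k$, so these never bound a single large coordinate. Either use the paper's downward-closed search from $(1,0,\ldots,0,1)$, whose termination is itself the certificate, or use three-spine subtrees such as $T_3(1,0,9)$ and $T_3(1,4,1)$.
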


Thus Theorem~1.1 completes the classification problem posed in
\cite{baihua-dem}.

In Section~2, we restate the negative and zero classifications separately as
Theorems~\ref{thm:main} and \ref{thm:zero}, after introducing the Ricci matrix
and the caterpillar quotient equations.

The contribution of the present work splits naturally into four parts. The cases $m=1$ and
$m=2$, namely stars and double stars, are treated separately. For caterpillars
with long spine, namely $m\ge12$, we combine a Rayleigh-quotient argument for
leaf attachment with an explicit Schur-complement analysis to reduce the
problem to the endpoint families. The remaining finite range $3\le m\le11$ is
then resolved by an exact finite verification, recorded in the tables below.
The same leaf-attachment argument also identifies the unique non-caterpillar
tree in the zero level set, namely $S_3^2$. Since the Einstein curvature is
$\kap=-\lambda_{\max}(R_T)$, this is exactly a classification of
positive-curvature discrete Einstein metrics on trees.

\begin{figure}[t]
\centering
\begin{tikzpicture}[scale=.92, every node/.style={circle,fill=black,inner sep=1.8pt}]
  \foreach \i in {1,...,6} {
    \node (v\i) at (1.35*\i,0) {};
    \node[draw=none,fill=none,rectangle,inner sep=1pt] at (1.35*\i,-.38) {\scriptsize $v_\i$};
  }
  \foreach \i in {1,...,5} {
    \pgfmathtruncatemacro{\j}{\i+1}
    \draw (v\i)--(v\j);
  }
  \node (l11) at (1.35,1.0) {};
  \node (l12) at (1.35,-1.0) {};
  \draw (v1)--(l11);
  \draw (v1)--(l12);
  \node (l31) at (4.05,1.0) {};
  \draw (v3)--(l31);
  \node (l61) at (8.10,1.05) {};
  \node (l62) at (7.58,-.95) {};
  \node (l63) at (8.62,-.95) {};
  \draw (v6)--(l61);
  \draw (v6)--(l62);
  \draw (v6)--(l63);
  \node[draw=none,fill=none,rectangle] at (4.75,-1.55)
    {$a=(2,0,1,0,0,3)$};
\end{tikzpicture}
\caption{The caterpillar $T_6(2,0,1,0,0,3)$ encoded by the leaf-count sequence
$a=(2,0,1,0,0,3)$.}
\end{figure}

The paper is organized as follows. Section~2 introduces the Ricci matrix, the
caterpillar quotient equations, and the main classification statements.
Section~3 gives the leaf-attachment argument, the initial cases $m=1,2$, and
the non-caterpillar reduction. Section~4 treats the long-spine caterpillar
case. Section~5 handles the remaining short-spine range by exact finite
verification. Section~6 discusses the infinite-tree setting. The appendices
contain the finite boundary tables, representative zero vectors, and the
boundary-generation check.

\section{Preliminaries}

Throughout the paper, all trees are finite, simple, connected, and undirected.
For a tree $T=(V,E)$ and a vertex $v\in V$, let $d_v$ be the degree of $v$.
A vertex of degree one is called a leaf or pendant vertex. An edge incident to a
leaf is called a leaf edge. An edge whose two endpoints both have degree at
least two is called an internal edge.

\begin{definition}[Star and caterpillar]
For $k\ge2$, a star $S_k$ is a tree with exactly one internal vertex and $k$
edges; we also include the degenerate case $S_1=K_2$. A caterpillar is a tree for
which removing all leaves leaves a path. This remaining path is called the
spine. In our parametrization, the spine vertices are denoted
$v_1,\ldots,v_m$, and $a_i$ is the number of leaves attached to $v_i$.
\end{definition}

We write $S_3^2$ for the tree obtained by subdividing each edge of the
three-leaf star $S_3$ once. Equivalently, $S_3^2$ has a central vertex of
degree three, and each of its three incident edges is extended by one
additional edge to a leaf.

We recall that for a caterpillar in canonical form one writes
\[
  a=(a_1,\ldots,a_m),\qquad a_1,a_m\ge1,\quad a_i\ge0.
\]
The corresponding tree is denoted by $T_m(a)$. Reversing $a$ gives an isomorphic
tree, so tables below list only one representative up to reversal. The case
$m=1$ is the star $S_{a_1}$ and is treated separately below.

\subsection{The Ricci curvature formula on trees}

Let $T=(V,E,w)$ be a weighted tree with $w_e>0$ for every edge $e$.
For a vertex $v$, set
\[
  S_v=\sum_{u\sim v} w_{uv}.
\]
For an edge $e=xy$, the Lin--Lu--Yau curvature on a tree is
\[
  \kap_{xy}
  =
  -\left(
    \frac{S_x-2w_{xy}}{d_xw_{xy}}
    +
    \frac{S_y-2w_{xy}}{d_yw_{xy}}
  \right).
\]
Therefore the condition $\kap_{xy}\equiv\kap$ is equivalent, after multiplying
the equation by $w_{xy}$, to
\[
  -\kap\, w_{xy}
  =
  \frac{S_x-2w_{xy}}{d_x}
  +
  \frac{S_y-2w_{xy}}{d_y}.
\]
This is precisely an eigenvalue equation for the Ricci matrix below, introduced
in \cite[Definition 1]{baihua-dem}.

\begin{definition}[Ricci matrix]
For a finite tree $T=(V,E)$, define the edge-indexed matrix
$R_T\in\mathbb R^{E\times E}$ by
\[
  (R_T)_{e,e}=-\left(\frac1{d_x}+\frac1{d_y}\right),
  \qquad e=\{x,y\},
\]
and, for distinct edges $e\ne f$,
\[
  (R_T)_{e,f}=
  \begin{cases}
    1/d_z,& e\cap f=\{z\},\\
    0,& e\cap f=\varnothing.
  \end{cases}
\]
\end{definition}

For an edge weight vector $w=(w_e)_{e\in E}$, the above curvature formula gives
\[
  R_Tw=\lambda w,\qquad \lambda=-\kap.
\]
The matrix $R_T$ is real symmetric and Metzler. Since the line graph $L(T)$ is
connected, $R_T+cI$ is irreducible and nonnegative for all sufficiently large
$c$. Perron--Frobenius theory implies that $\lambda_{\max}(R_T)$ is simple and
has a strictly positive eigenvector. Consequently every finite tree has a unique
discrete Einstein metric up to scaling, and its curvature is
\[
  \kap=-\lambda_{\max}(R_T).
\]
This recovers the spectral characterization proved in
\cite[Theorem 1.2]{baihua-dem}.

\begin{remark}[Schr\"odinger viewpoint]
Equivalently, $R_T$ may be viewed as a weighted Schr\"odinger operator on the line
graph $L(T)$. This viewpoint is useful near the zero level set
$\lambda_{\max}=0$, where degree-two chains satisfy a zero-energy second-order
recurrence.
\end{remark}

\subsection{The caterpillar quotient equations}

We record the finite system used throughout the classification. In this
subsection assume $m\ge2$, since the case $m=1$ is a star and is treated
separately. Let $T_m(a)$ be a caterpillar with spine vertices
$v_1,\ldots,v_m$.Write
$s_j=v_jv_{j+1}$ for the spine edge between $v_j$ and $v_{j+1}$, and put
\[
  q_i=
  \begin{cases}
    1,&i=1,m,\\
    2,&2\le i\le m-1,
  \end{cases}
  \qquad
  d_i=q_i+a_i .
\]
On the Perron eigenspace, all pendant edges attached to the same spine vertex
have the same value. We denote the value on $s_j$ by $x_j$, and the common value
of the pendant edges at $v_i$ by $y_i$ when $a_i>0$. Set
\[
  X_i=
  {\bf 1}_{i>1}x_{i-1}+{\bf 1}_{i<m}x_i+a_i y_i .
\]
The eigenvalue equation $R_Tw=\lambda w$ is equivalent to
\[
  \lambda x_j
  =
  \frac{X_j-2x_j}{d_j}
  +
  \frac{X_{j+1}-2x_j}{d_{j+1}},
  \qquad 1\le j\le m-1,
\]
and, for every $i$ with $a_i>0$,
\[
  \lambda y_i
  =
  -y_i+\frac{X_i-2y_i}{d_i}.
\]
Let $\mathcal U$ denote the subspace of edge functions that are constant on each
orbit of sibling pendant edges. The preceding equations are the matrix form of
$R_T|_{\mathcal U}$ in the natural, generally non-orthonormal common-value
coordinates. Since $R_T$ commutes with permutations of sibling pendant edges and
the Perron eigenvalue is simple, the Perron eigenvector lies in $\mathcal U$.
Hence $\lambda_{\max}(R_T)$ is the largest eigenvalue of this restricted
coordinate matrix.

This coordinate matrix has the correct spectrum on $\mathcal U$, but is not
symmetric in general. For Schur complements, inertia, and Sylvester's criterion,
one must pass to an orthonormal orbit basis.For every $i$ with $a_i>0$, let
\[
  u_i=\frac1{\sqrt{a_i}}\sum_{e\text{ pendant at }v_i} e
\]
be the normalized pendant-orbit vector, and write
\[
  z_i=\sqrt{a_i}\,y_i .
\]
Equivalently, if $M_m(a)$ is the common-value coordinate matrix and $D$ is the
diagonal matrix with entries $1$ on spine coordinates and $\sqrt{a_i}$ on the
pendant coordinate at $v_i$, then
\[
  Q_m(a)=D M_m(a)D^{-1}.
\]

In the orthonormal basis consisting of the spine edges and the vectors $u_i$,
the restricted matrix becomes a real symmetric matrix $Q_m(a)$, and the
eigenvalue equations take the form
\[
  \lambda x_j
  =
  -\left(\frac1{d_j}+\frac1{d_{j+1}}\right)x_j
  +\frac{{\bf 1}_{j>1}}{d_j}x_{j-1}
  +\frac{{\bf 1}_{j<m-1}}{d_{j+1}}x_{j+1}
  +\frac{\sqrt{a_j}}{d_j}z_j
  +\frac{\sqrt{a_{j+1}}}{d_{j+1}}z_{j+1},
\]
where the terms involving $z_j$ or $z_{j+1}$ are omitted when the
corresponding $a$-entry is zero, and
\[
  \lambda z_i
  =
  \frac{\sqrt{a_i}}{d_i}\bigl({\bf 1}_{i>1}x_{i-1}+{\bf 1}_{i<m}x_i\bigr)
  -\frac{q_i+2}{d_i}z_i .
\]
The matrices used in Sections 4 and 5 are always these symmetric orthonormal
orbit matrices. This is the setting in which Schur complements and
Sylvester's criterion apply directly.

\medskip
\noindent We can now state the main classification theorems.

For caterpillar parameters of the same spine order, define the coordinatewise
partial order by $a\le b$ if $a_i\le b_i$ for all $i$. Two parameters are said
to be coordinatewise comparable if either $a\le b$ or $b\le a$. A subset
$\mathcal A$ of caterpillar parameters is called downward closed if
$b\in\mathcal A$ and $a\le b$ imply $a\in\mathcal A$, and upward closed if
$a\in\mathcal A$ and $a\le b$ imply $b\in\mathcal A$.
For a subset $\mathcal A$ of caterpillar parameters, an element $a\in\mathcal A$
is called maximal in $\mathcal A$ if there is no $b\in\mathcal A$ with
$a<b$.

\begin{theorem}[Classification of positive-curvature trees]\label{thm:main}
Let $T$ be a finite tree. Then the discrete Einstein metric on $T$ has positive
curvature if and only if $\lambda_{\max}(R_T)<0$. Such a tree must be a
caterpillar. More precisely, the negative region of caterpillars is as follows.
\begin{enumerate}[label=(\roman*)]
  \item For $m=1$, i.e. for stars $S_k$, every $k\ge1$ satisfies
  $\lambda_{\max}(R_{S_k})<0$.
  \item For $m=2$, i.e. for double stars $T_2(a_1,a_2)$,
\[
  \lambda_{\max}(R_{T_2(a_1,a_2)})<0
  \quad\Longleftrightarrow\quad
  (a_1-1)(a_2-1)<4.
\]
  \item For $3\le m\le11$, the negative region is the downward closure of the
  maximal elements listed in Table \ref{tab:maximal-negative}.
  \item For $m\ge12$,
\[
  \lambda_{\max}(R_{T_m(a)})<0
\]
if and only if
\[
  a=(a_1,0,\ldots,0,a_m),\qquad
  1\le a_1,a_m\le3,\qquad (a_1,a_m)\ne(3,3).
\]
\end{enumerate}
\end{theorem}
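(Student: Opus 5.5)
The proof splits by spine order. The first assertion is already available: Perron--Frobenius gives $\kap=-\lambda_{\max}(R_T)$, and \cite{baihua-dem} shows that $\lambda_{\max}(R_T)<0$ forces $T$ to be a caterpillar. So everything reduces to the quotient matrix $Q_m(a)$. The basic tool is a leaf-attachment monotonicity: adding a pendant edge does not decrease $\lambda_{\max}$. I would prove it with the Rayleigh quotient. Extend the Perron vector of the smaller tree by a suitable value on the new edge, for instance the value forced by the new leaf's eigen-equation. The quadratic form changes only through the degree change $1/d\to1/(d+1)$ at the attachment vertex. For this change one has the identity
\[
\sum_{e\ni z}\frac{(w\text{-terms})}{d}=\frac{1}{d}\Bigl(\bigl(\textstyle\sum_{e\ni z}w_e\bigr)^2-2\sum_{e\ni z}w_e^2\Bigr)\cdot\frac12 ,
\]
and one checks that the new local contribution is at least the old one. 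Granting monotonicity, the negative region at each fixed $m$ is downward closed. It is therefore determined by its maximal elements, or equivalently by the minimal elements of its complement.

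For $m=1$, $Q_1$ is $1\times1$ in the orbit basis. For $S_k$ one computes directly that the constant vector gives $\lambda=-2/k$; for $k=1$ the value is $-2$. Both are negative. For $m=2$, $Q_2$ is $3\times3$, with coordinates $x_1$, $z_1$, $z_2$. I would test $\lambda_{\max}<0$ via Sylvester's criterion on $-Q_2$. The leading minors are visibly positive, and the determinant condition should factor, after clearing the denominators $d_1d_2$, into a positive multiple of $4-(a_1-1)(a_2-1)$. The degenerate cases $a_i=1$ need a quick check.

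For $m\ge12$ I would do two things. First, show that the endpoint family $(a_1,0,\dots,0,a_m)$ with the stated bounds is negative. Second, show that every minimal forbidden element is excluded: $(3,0,\dots,0,3)$ has $\lambda_{\max}=0$, and any caterpillar with an interior leaf $a_i\ge1$ for $2\le i\le m-1$, or with an endpoint count $\ge4$, has $\lambda_{\max}\ge0$. For the interior spine with $a_i=0$, the degree-two chain satisfies a constant-coefficient recurrence. I would eliminate it by a Schur complement, reducing $Q_m$ to a small matrix acting on the boundary blocks (endpoint leaves, plus possibly one interior leaf block). The entries of the Schur complement are explicit ratios coming from Chebyshev-type solutions of the three-term recurrence with diagonal $-1$ and off-diagonals $1/2$. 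At $\lambda=0$ these grow linearly in the chain length.

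For exclusion, I would exhibit a test vector with nonnegative Rayleigh quotient. Near the zero level this vector is the linearly growing zero-energy chain solution, glued to the stable zero vector of $(3,0,\dots,0,3)$. For negativity, I would check Sylvester/inertia on the reduced boundary matrix, uniformly in $m$, using the monotone dependence of the Schur complement entries on $m$. The main obstacle is the interior-leaf case. There the positivity margin decays roughly like $1/m$, and one must show it is positive exactly once $m\ge12$, which is where the threshold $12$ enters. I would first try to write the reduced determinant as an explicit rational function of $m$ and analyze its sign for $m\ge12$.

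Finally, for $3\le m\le11$ I would do an exact finite computation. Downward closure plus the $m\ge12$ style bounds (endpoint counts $\le$ some small constant, interior counts bounded) make the candidate set finite. For each candidate I would compute the rational characteristic polynomial of the common-value matrix $M_m(a)$, or of $\det(-Q)$ after squaring out radicals, and count roots in $[0,\infty)$ by Sturm sequences. This certifies the maximal elements listed in Table~\ref{tab:maximal-negative} and the minimal elements of the complement.
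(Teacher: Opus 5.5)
Your proposal is correct and follows essentially the paper's route. The steps match: Perron--Frobenius plus the cited caterpillar implication; a Rayleigh-quotient leaf-attachment argument making the negative region downward closed; stars and double stars (the paper cites the double-star case, and your direct $3\times3$ check works); a Schur-complement/inertia analysis for $m\ge12$ driven by the double root of the zero-energy spine recurrence; and exact Sturm-sequence verification for $3\le m\le11$. In the long-spine step the paper eliminates the pendant orbits rather than the chain, obtaining the factor $B_{m,i}=8i^2-8im-8i+13m+20$, which is maximized at $i=2,m-1$, where it equals $36-3m$.

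Two small corrections. First, your extension argument only shows that $\lambda_{\max}\ge0$ is preserved under leaf attachment, which is all you use; since $\|\widetilde f\|>\|f\|$, it does not show that $\lambda_{\max}$ never decreases once $\lambda_{\max}>0$. Second, for $m=2$ the second leading minor of $-Q_2$ in the order $(x_1,z_1,z_2)$ is not visibly positive. List $z_1,z_2$ first; then the only nontrivial condition is the Schur complement $\frac{a_1-3}{3(a_1+1)}+\frac{a_2-3}{3(a_2+1)}<0$, i.e.\ $(a_1-1)(a_2-1)<4$.
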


\begin{theorem}[The zero level set]\label{thm:zero}
Among non-caterpillar trees with $\lambda_{\max}(R_T)=0$, the only example is
$S_3^2$. Every tree that properly contains $S_3^2$ as a connected subtree has
$\lambda_{\max}(R_T)>0$.
Among caterpillars, the zero level set consists of the stable family
\[
  (3,0,\ldots,0,3),
\]
obtained by subdividing the central edge of the double star $S_{3,3}$, together
with the finite list in Table \ref{tab:zero-exceptions}, up to reversal.
\end{theorem}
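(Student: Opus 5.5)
The plan has four parts: a leaf-attachment monotonicity lemma, the non-caterpillar reduction, the long-spine caterpillars, and a finite computation for short spines.

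\emph{Step 1: a leaf-attachment lemma.} If $T'$ is obtained from $T$ by attaching a leaf at a vertex $v$ of degree $d\ge 2$, I will show that $\lambda_{\max}(R_{T'})\ge\lambda_{\max}(R_T)$, with strict inequality whenever $\lambda_{\max}(R_T)\ge 0$. The argument uses Rayleigh quotients. Take the Perron vector $w$ of $R_T$, and extend it to $T'$ by giving the new pendant edge the value $t$, chosen optimally. Only the terms involving $1/d_v$ change, since $1/d$ becomes $1/(d+1)$. At $v$ the quadratic form contributes $\frac{1}{d_v}\bigl[(\sum_{e\ni v}w_e)^2-2\sum_{e\ni v}w_e^2\bigr]$. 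A direct computation of the change in the form, optimized over $t$, should give a nonnegative increment when $\lambda\ge0$. I will then combine this with the attachment monotonicity results of \cite{baihua-dem} (Propositions 3 and 7).

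\emph{Step 2: non-caterpillars.} By \cite{baihua-dem}, $\lambda_{\max}<0$ forces $T$ to be a caterpillar. So for the zero set it suffices to consider non-caterpillars, and every non-caterpillar contains $S_3^2$ as a subtree. First I verify directly that $\lambda_{\max}(R_{S_3^2})=0$. The $\mathbb Z_3$-symmetric vector, with value $1$ on the inner edges and value $c$ on the outer edges, reduces this to a $2\times2$ eigenproblem. Any tree properly containing $S_3^2$ is then reached from it by successive leaf attachments. By the strict form of Step 1 at $\lambda\ge0$, each attachment makes $\lambda_{\max}$ strictly positive.

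\emph{Step 3: caterpillars with $m\ge12$.} For the family $(3,0,\dots,0,3)$, I construct an explicit zero vector using the quotient equations $Q_m(a)$. Along the degree-$2$ interior the zero-energy recurrence is $x_{j-1}-2x_j+x_{j+1}=0$ up to boundary terms, so the vector is linear or constant there. The endpoints with $a=3$ then close the system at $\lambda=0$ with a positive vector, and Perron--Frobenius gives $\lambda_{\max}=0$.

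Negativity for the listed families, together with the fact that positivity holds everywhere else, I will prove by a Schur complement argument. I eliminate the pendant coordinates; these have negative diagonal $-(q_i+2)/d_i$. The result is a tridiagonal spine matrix, and negative definiteness can be checked by Sylvester's criterion through a continuant recurrence. With interior $a_i=0$, the continuant is an affine sequence in $j$. Its sign at the far end depends only on the boundary data $(a_1,a_m)$ in a way independent of $m$ once $m$ is large, which yields the condition $1\le a_1,a_m\le3$ with $(a_1,a_m)\ne(3,3)$. If instead some interior $a_i\ge1$, the same recurrence shows that the continuant changes sign before $j$ reaches $12$. Step 1 then lets me restrict to a single interior leaf.

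\emph{Step 4: $3\le m\le 11$ and $m=1,2$.} For stars ($m=1$) and double stars ($m=2$), the quotient matrix is small enough that the zero and negative conditions can be read off its determinant. In the range $3\le m\le11$, monotonicity makes both the negative region and the zero set finite and downward-structured. I will certify every boundary parameter exactly: compute the rational characteristic polynomial of the symmetrized quotient and count roots in $[0,\infty)$ with Sturm sequences. This produces Tables~\ref{tab:maximal-negative} and~\ref{tab:zero-exceptions}. The main obstacle is Step 3, namely obtaining uniform-in-$m$ sign control of the continuants for all interior configurations. As a first attempt I would reduce to a single interior leaf placed at position $i$, and bound the resulting continuant explicitly as a function of $i$ and $m$.
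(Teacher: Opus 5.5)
Your outline follows the paper's architecture: Rayleigh-quotient leaf attachment, the $S_3^2$ reduction, Schur complements on the spine, and exact Sturm counts for short spines. However, the cut between the analytic range and the computed range is off by one for the zero level set, and this loses a genuine exceptional example.

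For a single interior leaf at $v_i$, the spine Schur complement has determinant $(-1)^{m+1}B_{m,i}/(81\cdot2^{m-2})$ with $B_{m,i}=8i^2-8im-8i+13m+20$. In particular $B_{m,2}=B_{m,m-1}=36-3m$. At $m=12$ this is exactly zero. The continuant does not change sign there, it vanishes, so you only get $\lambda_{\max}\ge0$.

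Equality really occurs. $T_{12}(1,0,\ldots,0,1,1)$ carries a positive null vector, with spine weights $(3,5,7,\ldots,21,15)$ and pendant weights $1,9,5$, so $\lambda_{\max}=0$. Your Step 3 asserts strict positivity for every caterpillar with $m\ge12$ and an interior leaf, and your Step 4 computes only up to $m=11$. As written, the plan never produces the $m=12$ entry of Table~\ref{tab:zero-exceptions}, and in fact contradicts it.

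The threshold $m\ge12$ is right for the negative region, where $B_{m,i}\le0$ suffices. Strict positivity, however, needs $B_{m,i}<0$, i.e.\ $m\ge13$. This is why the paper states Corollary~\ref{cor:long-zero-reduction} for $m\ge13$ and runs the exact zero-set check over $3\le m\le12$. You can either extend the finite computation to $m=12$, or settle $m=12$ by hand:
\begin{enumerate}
\item $B_{12,i}<0$ for $3\le i\le10$ by strict convexity in $i$.
\item The degenerate cases $i\in\{2,11\}$ are the reversal pair above, with its explicit null vector.
\item Strict crossing then rules out everything coordinatewise above them.
\item Endpoint-only families at $m=12$ are still handled by your continuant argument, since $r_{10}<0$ whenever an endpoint carries at least $4$ leaves.
\end{enumerate}

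Separately, Step 1 is overstated. The optimized extension shows only that the numerator $\langle\tilde f,R_{T'}\tilde f\rangle$ does not drop. Since the norm grows by $t^2$, this controls the sign of $\lambda_{\max}$ but not its size once $\lambda_{\max}>0$. In that regime the claimed inequality $\lambda_{\max}(R_{T'})>\lambda_{\max}(R_T)$ is false. For example, take a vertex of degree $4$ whose neighbours each carry $99$ further leaves; then $\lambda_{\max}\approx0.508$, and attaching one leaf at that vertex lowers it to about $0.503$.

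What your argument actually needs, and what the computation gives, is the sign version:
\begin{enumerate}
\item $\lambda_{\max}(R_T)>0\Rightarrow\lambda_{\max}(R_{T'})>0$;
\item $\lambda_{\max}(R_T)=0\Rightarrow\lambda_{\max}(R_{T'})>0$, using positivity of the Perron vector to get $S^2\le dA<(d+2)A$.
\end{enumerate}
You should also drop the restriction $d\ge2$. Growing $S_3^2$ into, for instance, $S_3^2$ with one arm lengthened requires attaching at a leaf, and the same computation works for $d=1$.
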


\begin{remark}
The proof separates the structural reductions from the finite verification.
The stable zero family $(3,0,\ldots,0,3)$ already appears in
\cite[Proposition 8]{baihua-dem}; the new point here is the complete exhaustion
of both the negative region and the zero level set.
\end{remark}

\section{Preliminary Reductions}

This section collects the preliminary reductions used throughout the
classification. The main ingredient is a Rayleigh-quotient argument for leaf
attachment, based on a useful vertex decomposition of the Ricci matrix.

For an edge function $f:E(T)\to\mathbb R$, put
\[
  S_v(f)=\sum_{e\ni v}f_e,\qquad
  A_v(f)=\sum_{e\ni v}f_e^2.
\]
A direct expansion of the definition of $R_T$ gives the vertex decomposition
\[
  \langle f,R_Tf\rangle
  =
  \sum_{v\in V(T)}
  \frac1{d_v}\left(S_v(f)^2-2A_v(f)\right).
\]
Indeed, at a fixed vertex $v$,
\[
  2\sum_{\{e,e'\}\subset E_v}f_ef_{e'}
  =
  \left(\sum_{e\in E_v}f_e\right)^2-\sum_{e\in E_v}f_e^2,
\]
and combining this with the diagonal contribution yields the displayed formula.

Once the implication
\[
  \lambda_{\max}(R_T)<0 \Longrightarrow T \text{ is a caterpillar}
\]
is known, the non-caterpillar sign consequences can be recovered after the
present classification is established. Moreover, in view of the leaf-attachment
monotonicity established in the previous paper, compare
\cite[Propositions 3 and 7]{baihua-dem}, the following conclusion may also be
derived from the earlier results. We nevertheless record an independent
Rayleigh-quotient proof, both for completeness and because it is the form used
later in the zero-layer argument.

\begin{proposition}[Monotonicity of the nonnegative region under leaf attachment]
\label{prop:upper-closed}
Let $T'$ be obtained from $T$ by attaching one pendant edge at a vertex $v$.
If
\[
  \lambda_{\max}(R_T)\ge0,
\]
then
\[
  \lambda_{\max}(R_{T'})\ge0.
\]
Moreover, if $\lambda_{\max}(R_T)>0$, then
$\lambda_{\max}(R_{T'})>0$.
\end{proposition}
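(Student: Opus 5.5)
The plan is to work directly with the vertex decomposition of the quadratic form recorded above,
\[
  \langle f,R_Tf\rangle=\sum_{v}\frac1{d_v}\bigl(S_v(f)^2-2A_v(f)\bigr),
\]
together with the Rayleigh characterization $\lambda_{\max}(R_{T'})=\max_{g\ne0}\langle g,R_{T'}g\rangle/\|g\|^2$. Let $f$ be the Perron eigenvector of $R_T$, which is strictly positive, and write $\lambda=\lambda_{\max}(R_T)$, so that $\langle f,R_Tf\rangle=\lambda\|f\|^2$. Let $e_0$ be the new pendant edge at $v$, joining $v$ to a new leaf $\ell$. I will use the test function $g$ on $E(T')$ given by $g|_{E(T)}=f$ and $g_{e_0}=t$, where the real parameter $t$ is chosen below. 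It then suffices to show that $\langle g,R_{T'}g\rangle>\langle f,R_Tf\rangle$ for a suitable $t$. Indeed, if $\lambda\ge0$ this gives $\langle g,R_{T'}g\rangle>0$, hence $\lambda_{\max}(R_{T'})>0$. This proves both assertions at once, and in fact slightly more.

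The first step is to localize the change. Passing from $T$ to $T'$ affects only two terms of the vertex decomposition: the term at $v$, whose degree increases from $d=d_v$ to $d+1$, and the new term at $\ell$. Write $S=S_v(f)$ and $A=A_v(f)$. At $v$ the new term is $\bigl((S+t)^2-2A-2t^2\bigr)/(d+1)$, replacing $(S^2-2A)/d$. The leaf $\ell$ has degree one and contributes $t^2-2t^2=-t^2$. Collecting these terms over the common denominator $d(d+1)$, the change $\Delta(t)=\langle g,R_{T'}g\rangle-\langle f,R_Tf\rangle$ should be
\[
  \Delta(t)=\frac{2A-S^2+2dSt-d(d+2)t^2}{d(d+1)} .
\]

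The second step is to optimize in $t$. The numerator is a concave quadratic in $t$, maximized at $t=S/(d+2)$, where it equals $2A-2S^2/(d+2)$. By Cauchy--Schwarz over the $d$ edges at $v$, we have $A\ge S^2/d$. Hence the maximal value is at least $2S^2\bigl(1/d-1/(d+2)\bigr)$, which is strictly positive because $f>0$ forces $S>0$. So $\Delta(S/(d+2))>0$, which completes the argument. Since $\|g\|^2=\|f\|^2+t^2>0$, no normalization issue arises.

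The only delicate point is the bookkeeping in the first step. One must check that the cross terms at $v$ involving $e_0$ really enter as $2St$, coming from $(S+t)^2$, and that the diagonal change for existing edges at $v$ is correctly absorbed by replacing $1/d$ with $1/(d+1)$. This is exactly what the vertex decomposition handles automatically, so I expect no real obstacle. The degenerate case where $v$ is a leaf of $T$ ($d=1$) is covered by the same formula.
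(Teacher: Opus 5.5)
Your argument is correct and is essentially the paper's proof: you use the same vertex-decomposition computation of $\Delta$, the same optimal choice $t=S/(d+2)$ giving $\frac{2}{d(d+1)}\bigl(A-\frac{S^2}{d+2}\bigr)$, and Cauchy--Schwarz. The one difference is that you test with the strictly positive Perron vector from the start, so $\lambda_{\max}(R_T)\ge0$ already forces $\lambda_{\max}(R_{T'})>0$. The paper instead proves the non-strict statement for an arbitrary test vector and obtains this strict crossing separately in Corollary~\ref{cor:strict-zero}.
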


\begin{proof}
Let $d=d_v$. Extend an old edge function $f$ to $T'$ by assigning value $y$ to
the new pendant edge. Only the local contribution at $v$ and the new leaf
endpoint changes. Put
\[
  S=\sum_{e\ni v}f_e,\qquad A=\sum_{e\ni v}f_e^2.
\]
Then
\[
\begin{aligned}
\Delta(y)
&=\langle \widetilde f,R_{T'}\widetilde f\rangle
  -\langle f,R_Tf\rangle\\
&=
  -\frac{S^2-2A}{d(d+1)}
  +\frac{2Sy}{d+1}
  -\frac{d+2}{d+1}y^2 .
\end{aligned}
\]
This quadratic polynomial is maximized at $y=S/(d+2)$, and
\[
  \max_y\Delta(y)
  =
  \frac2{d(d+1)}
  \left(A-\frac{S^2}{d+2}\right)\ge0
\]
by Cauchy's inequality. If $R_T$ has a nonnegative Rayleigh quotient, the
extended vector can be chosen so that
\[
  \langle \widetilde f,R_{T'}\widetilde f\rangle
  \ge
  \langle f,R_Tf\rangle.
\]
Hence, if $\langle f,R_Tf\rangle\ge0$, the extension can be chosen so that
$\langle \widetilde f,R_{T'}\widetilde f\rangle\ge0$. Since
$\|\widetilde f\|^2>0$, this gives a nonnegative Rayleigh quotient for
$R_{T'}$. If the original Rayleigh quotient is strictly positive, the same
argument yields a strictly positive Rayleigh quotient for $R_{T'}$.
\end{proof}

\begin{corollary}[Strict crossing of the zero level set]\label{cor:strict-zero}
Let $T'$ be obtained from $T$ by attaching one pendant edge at an arbitrary
vertex of $T$. If $\lambda_{\max}(R_T)=0$, then
\[
  \lambda_{\max}(R_{T'})>0.
\]
\end{corollary}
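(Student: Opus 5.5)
Take $f$ to be the Perron eigenvector of $R_T$, so that $f>0$ entrywise and $\langle f,R_Tf\rangle=\lambda_{\max}(R_T)\|f\|^2=0$. Extend $f$ to $T'$ exactly as in the proof of Proposition~\ref{prop:upper-closed}, giving the new pendant edge the value $y=S/(d+2)$, where $S=\sum_{e\ni v}f_e$, $A=\sum_{e\ni v}f_e^2$ and $d=d_v$. That proof gives
\[
  \langle \widetilde f,R_{T'}\widetilde f\rangle
  = 0+\frac{2}{d(d+1)}\Bigl(A-\frac{S^2}{d+2}\Bigr).
\]
Strict positivity of the Rayleigh quotient of $\widetilde f$, and hence $\lambda_{\max}(R_{T'})>0$, will follow once this gap is shown to be strictly positive.

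By Cauchy--Schwarz, $S^2\le dA$, and $d<d+2$. Since $f>0$ and $d\ge1$, we have $A>0$. Therefore
\[
  \frac{S^2}{d+2}\le \frac{dA}{d+2}<A,
\]
so the gap is strictly positive. Notice that we never need the Cauchy equality case; the loss from $d$ to $d+2$ in the denominator already gives strictness. Positivity of $f$ is used only to guarantee $A>0$, that is, that not every edge at $v$ vanishes. Any nonzero eigenvector would do for this, but the Perron vector makes it immediate.

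The only subtle point is that $\lambda_{\max}(R_{T'})\ge\langle \widetilde f,R_{T'}\widetilde f\rangle/\|\widetilde f\|^2$, with $\|\widetilde f\|>0$, and the right-hand side is strictly positive. So there is no real obstacle. The step most worth checking is that the gap formula for $\Delta(y)$ from the proposition really depends only on the local data $S,A,d$ at $v$. It does, because the new leaf endpoint has degree one and contributes $(y^2-2y^2)/1=-y^2$, which is already included in the quadratic.
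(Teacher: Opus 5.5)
Your proof is correct and is essentially the paper's argument: take the strictly positive Perron vector and extend it by $y=S/(d+2)$, so the gain is $\frac{2}{d(d+1)}\bigl(A-\frac{S^2}{d+2}\bigr)$, then get strictness from $S^2\le dA<(d+2)A$ with $A>0$. Your side remark that any nonzero eigenvector would do is fine only because $\lambda_{\max}(R_T)$ is simple, so every such vector is a multiple of the Perron vector and cannot vanish on all edges at $v$.
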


\begin{proof}
Take the strictly positive Perron vector $f$ of $R_T$. Then $A>0$ and
$S^2\le dA<(d+2)A$, so
\[
  A-\frac{S^2}{d+2}>0.
\]
Therefore
\[
  \max_y\Delta(y)
  =
  \frac2{d(d+1)}
  \left(A-\frac{S^2}{d+2}\right)
  >0.
\]
Since $\langle f,R_Tf\rangle=0$, the corresponding extension has strictly
positive Rayleigh quotient for $R_{T'}$, and hence
\[
  \lambda_{\max}(R_{T'})>0.
\]
\end{proof}

\begin{example}[Stars]\label{ex:stars}
Let $S_k$ be the star with $k$ edges. Then
\[
  R_{S_k}=-\left(1+\frac2k\right)I+\frac1kJ,
\]
where $J$ is the $k\times k$ all-ones matrix. Hence
\[
  \lambda_{\max}(R_{S_k})=-\frac2k,
\]
with Perron eigenvector ${\bf 1}$.
\end{example}

Each edge of $S_k$ joins the central vertex of degree $k$ to a leaf of degree
$1$. Hence each diagonal entry of $R_{S_k}$ is
\[
  -1-\frac1k,
\]
and two distinct edges meet only at the center, contributing $1/k$ to the
off-diagonal entry. Therefore
$R_{S_k}=-(1+2/k)I+J/k$. The vector ${\bf 1}$ is an eigenvector with eigenvalue
\[
  -1-\frac1k+\frac{k-1}{k}=-\frac2k.
\]

\begin{example}[Double stars {\cite[Example 2]{baihua-dem}}]\label{ex:double-star}
Let $T_2(a,b)$ be the double star whose two spine vertices carry $a$ and $b$
pendant edges. Then
\[
  \lambda_{\max}(R_{T_2(a,b)})<0
  \quad\Longleftrightarrow\quad
  (a-1)(b-1)<4,
\]
and equality holds exactly when $(a-1)(b-1)=4$.
\end{example}

In particular, up to symmetry, the zero cases are
\[
  (a,b)=(3,3),\ (2,5).
\]
The symmetric case $(3,3)$ will later generate the stable zero family
\[
  (3,0,\ldots,0,3)
\]
by subdividing the central edge.

\subsection{Non-caterpillar reduction}

\begin{lemma}\label{lem:noncat-s32}
Every non-caterpillar tree contains $S_3^2$ as a connected subtree.
\end{lemma}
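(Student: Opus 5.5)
The plan is to argue through the leaf-deleted tree. Let $T$ be a non-caterpillar tree and let $T^\circ$ denote the subtree obtained by deleting all leaves of $T$. This subtree is connected, since removing leaves never disconnects a tree. By definition of caterpillar, $T^\circ$ is not a path. The empty tree and the single vertex count as degenerate paths here; these cases arise for $T=K_2$ or a star, which are caterpillars. Hence $T^\circ$ is a tree that is not a path, so it has a vertex $c$ with $\deg_{T^\circ}(c)\ge3$. Fix three distinct neighbours $u_1,u_2,u_3$ of $c$ inside $T^\circ$.

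Next, each $u_i$ is a vertex of $T^\circ$, so it is not a leaf of $T$ and has $\deg_T(u_i)\ge2$. Thus $u_i$ has a neighbour $\ell_i\ne c$ in $T$. I will show that the vertices $c,u_1,u_2,u_3,\ell_1,\ell_2,\ell_3$ are pairwise distinct. Since $T$ is a tree, $\ell_i\ne u_j$ for $j\ne i$; otherwise $c,u_i,u_j$ would form a triangle. Likewise $\ell_i\ne\ell_j$ for $i\ne j$; otherwise $c,u_i,\ell_i,u_j$ would form a $4$-cycle. Also $\ell_i\ne c$ by choice. The six edges $cu_i$ and $u_i\ell_i$ therefore span a subgraph on seven distinct vertices. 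This subgraph is connected, and as a subgraph of a tree it is acyclic, so it is a subtree. It is exactly the subdivided star $S_3^2$, with center $c$.

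The only mildly delicate point, which I expect to be the main (though small) obstacle, is the bookkeeping of the degenerate cases of $T^\circ$. We need that ``removing all leaves leaves a path'' includes the cases where $T^\circ$ is empty or a single vertex. We also need that a tree other than such a path necessarily has a vertex of degree at least $3$. The latter is the standard fact that a tree with maximum degree at most $2$ is a path.

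Finally, the argument only uses that $T^\circ$ is connected. This is immediate: a path in $T$ between two non-leaf vertices passes only through interior vertices of degree at least $2$, so it stays inside $T^\circ$.
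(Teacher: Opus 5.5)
Your proof is correct and follows the paper's argument exactly: delete the leaves, pick a vertex of degree at least three in $T^\circ$ together with three of its neighbours, and extend each neighbour by one further edge of $T$. Your explicit check that the seven vertices are distinct (via the no-triangle and no-$4$-cycle arguments), the connectedness of $T^\circ$, and the degenerate cases are details the paper leaves implicit.
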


\begin{proof}
Let $T^\circ$ be the graph obtained from $T$ by deleting all leaves. Since $T$
is not a caterpillar, the graph $T^\circ$ is not a path. Therefore $T^\circ$
contains a vertex $v$ of degree at least three. Choose three distinct neighbors
$u_1,u_2,u_3$ of $v$ in $T^\circ$. Because each $u_j$ still belongs to
$T^\circ$, it has a neighbor $w_j\neq v$ in $T$. The edges
\[
  vu_1,\ vu_2,\ vu_3,\ u_1w_1,\ u_2w_2,\ u_3w_3
\]
form a connected subtree isomorphic to $S_3^2$.
\end{proof}

\begin{figure}[t]
\centering
\begin{tikzpicture}[scale=1.05, every node/.style={circle,fill=black,inner sep=2.0pt}]
  \node (c) at (0,0) {};
  \node (a1) at (-1.25,0.95) {};
  \node (a2) at (-2.50,1.90) {};
  \node (b1) at (1.25,0.95) {};
  \node (b2) at (2.50,1.90) {};
  \node (d1) at (0,-1.35) {};
  \node (d2) at (0,-2.70) {};
  \draw (c)--(a1)--(a2);
  \draw (c)--(b1)--(b2);
  \draw (c)--(d1)--(d2);
  \node[draw=none,fill=none,rectangle] at (0,-3.25) {$S_3^2$};
\end{tikzpicture}
\caption{The unique non-caterpillar tree in the zero level set, namely $S_3^2$.}
\end{figure}

\begin{proposition}[The non-caterpillar part of the zero level set]
Let $T$ be a non-caterpillar tree. Then
\[
  \lambda_{\max}(R_T)=0
  \quad\Longleftrightarrow\quad
  T=S_3^2.
\]
In particular, if $T\ne S_3^2$, then $\lambda_{\max}(R_T)>0$.
\end{proposition}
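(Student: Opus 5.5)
Two facts combine to give the proposition. First, $\lambda_{\max}(R_{S_3^2})=0$. Second, every tree properly containing $S_3^2$ as a connected subtree has $\lambda_{\max}>0$.

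Granting these, let $T$ be a non-caterpillar. By Lemma~\ref{lem:noncat-s32}, $T$ contains $S_3^2$ as a connected subtree. If $T=S_3^2$, we are done by the first fact. Otherwise $T$ properly contains $S_3^2$, so $\lambda_{\max}(R_T)>0$. Conversely, $S_3^2$ is itself a non-caterpillar: deleting its three leaves leaves a star $S_3$, which is not a path.

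For the first fact, I use the symmetry of $S_3^2$. Each inner edge joins the center (degree $3$) to a middle vertex (degree $2$), and each outer edge joins a middle vertex to a leaf. By Perron--Frobenius and invariance under the $S_3$ symmetry, the Perron vector is constant on inner edges (value $x$) and on outer edges (value $y$). Write out the reduced equations. For an inner edge, the diagonal entry is $-(1/3+1/2)$, the two other inner edges contribute $1/3$ each, and the adjacent outer edge contributes $1/2$. This gives $\lambda x=-\tfrac56x+\tfrac23x+\tfrac12y=-\tfrac16x+\tfrac12y$. For an outer edge, the diagonal entry is $-(1/2+1)$ and the adjacent inner edge contributes $1/2$, giving $\lambda y=\tfrac12x-\tfrac32y$. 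The $2\times2$ matrix $\begin{pmatrix}-1/6&1/2\\1/2&-3/2\end{pmatrix}$ has determinant $\tfrac14-\tfrac14=0$ and trace $-\tfrac53<0$. Its eigenvalues are therefore $0$ and $-\tfrac53$, with the $0$-eigenvector $(x,y)=(3,1)>0$. A positive eigenvector of an irreducible Metzler matrix is the Perron vector, so $\lambda_{\max}(R_{S_3^2})=0$.

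For the second fact, suppose $T\supsetneq S_3^2$ is a connected subtree inclusion. Then $T$ is obtained from $S_3^2$ by a sequence of single pendant-edge attachments: repeatedly add an edge of $T$ adjacent to the current subtree, and each such step attaches a new leaf at an existing vertex. The first attachment gives $\lambda_{\max}>0$ by Corollary~\ref{cor:strict-zero}. Each later attachment preserves strict positivity by Proposition~\ref{prop:upper-closed}.

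The only real computation is the $2\times2$ reduction for $S_3^2$, so I expect no genuine obstacle. The one point needing care is justifying that the Perron eigenvalue is attained on the symmetric subspace. This follows, as in the caterpillar quotient discussion, from simplicity of the Perron eigenvalue and the automorphism invariance of $R_T$. Alternatively, it suffices to exhibit the positive eigenvector $(3,1)$ directly, as above.
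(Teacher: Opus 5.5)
Your proof is correct and follows the paper's argument. The same positive null vector (value $3$ on the three center edges, $1$ on the leaf edges) certifies $\lambda_{\max}(R_{S_3^2})=0$ via Perron--Frobenius, and the same chain of single pendant-edge attachments handles every proper supertree, using Corollary~\ref{cor:strict-zero} for the first step and the strict part of Proposition~\ref{prop:upper-closed} thereafter. The differences are cosmetic: you derive the null vector from the $2\times2$ symmetric reduction instead of citing it, and you state explicitly that $S_3^2$ is itself a non-caterpillar, which the paper leaves implicit.
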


\begin{proof}
The equality $\lambda_{\max}(R_{S_3^2})=0$ is already recorded in
\cite[Definition 5 and the discussion preceding Proposition 4]{baihua-dem}.
Equivalently, that paper gives a positive zero vector on $S_3^2$ with value
$3$ on the three edges incident to the center and value $1$ on the three leaf
edges. Hence $R_{S_3^2}w=0$, and Perron--Frobenius implies that
$\lambda_{\max}(R_{S_3^2})=0$.
By Lemma \ref{lem:noncat-s32}, every non-caterpillar tree $T\ne S_3^2$
properly contains $S_3^2$ as a connected subtree. Since every finite tree can
be obtained from any connected subtree by successively attaching pendant edges,
there exists a finite sequence of trees
\[
  S_3^2=T_0\subset T_1\subset \cdots \subset T_k=T
\]
such that, for each $j=1,\dots,k$, the tree $T_j$ is obtained from $T_{j-1}$
by attaching one pendant edge at some vertex. By Corollary
\ref{cor:strict-zero},
\[
  \lambda_{\max}(R_{T_1})>0.
\]
Applying the strict part of Proposition \ref{prop:upper-closed} inductively
along the sequence, we obtain
\[
  \lambda_{\max}(R_{T_j})>0,\qquad 1\le j\le k.
\]
In particular,
\[
  \lambda_{\max}(R_T)>0.
\]
\end{proof}

\section{Long-Spine Caterpillars}

We now treat the case of long spines. The aim of this section is to show that,
for $m\ge 12$, a caterpillar with $\lambda_{\max}(R_T)<0$ cannot have any
internal pendant edges. Once this has been proved, only the endpoint families
$C_m(a,b)$ remain.

For a caterpillar, the pendant edges attached to a fixed spine vertex form a
single orbit under the automorphism group permuting these sibling leaves.
Because the Perron eigenvector is unique and positive, it is constant on each
such orbit. Hence the spectral top $\lambda_{\max}(R_T)$ is equal to the
spectral top of the symmetric orbit quotient introduced in Section 2.

To exclude internal pendant edges, we begin with the minimal configuration in
which such a defect occurs. For each $2\le i\le m-1$, define
\[
  \eta_i=(1,0,\ldots,0,\underset{i\text{-th position}}{1},0,\ldots,0,1).
\]
Thus the corresponding caterpillar $T_m(\eta_i)$ has exactly three pendant-edge
orbits: one at each endpoint $v_1,v_m$, and one at the internal spine vertex
$v_i$. Any caterpillar with an internal pendant edge at $v_i$ is coordinatewise
larger than this minimal defect.

\begin{figure}[t]
\centering
\begin{tikzpicture}[
  scale=.92,
  vertex/.style={circle,fill=black,inner sep=1.45pt},
  every node/.style={font=\small}
]
  \node[vertex] (v1) at (0,0) {};
  \node[vertex] (v2) at (1.15,0) {};
  \node[vertex] (v3) at (2.30,0) {};
  \node[vertex] (v4) at (4.80,0) {};
  \node[vertex] (v5) at (5.95,0) {};
  \node[vertex] (v6) at (7.10,0) {};
  \draw (v1)--(v2)--(v3);
  \draw[densely dotted,thick] (v3)--(v4);
  \draw (v4)--(v5)--(v6);
  \node at (3.55,.18) {$\cdots$};

  \node[vertex] (l1) at (-.65,.72) {};
  \draw (v1)--(l1);
  \node[vertex] (li) at (3.55,.78) {};
  \draw ($(3.55,0)$)--(li);
  \node[vertex] (lm) at (7.75,.72) {};
  \draw (v6)--(lm);

  \node[draw=none,fill=none,rectangle] at (3.55,-.95)
    {$T_m(\eta_i)$: one pendant-edge orbit at $v_1$, one at $v_i$, and one at $v_m$};
\end{tikzpicture}
\caption{The minimal internal defect $T_m(\eta_i)$.}
\label{fig:minimal-internal-defect}
\end{figure}

The proof strategy for the long-spine case is therefore:
\begin{enumerate}[label=\textup{(\arabic*)}]
\item show that each minimal internal defect $T_m(\eta_i)$ already has
\[
  \lambda_{\max}(R_{T_m(\eta_i)})\ge0
\]
when $m\ge12$;
\item use the upward closure of the nonnegative region to exclude every
caterpillar that contains an internal defect;
\item reduce the classification to the endpoint families $C_m(a,b)$.
\end{enumerate}

\begin{lemma}[Schur determinant for one internal pendant edge]
\label{lem:schur}
Let $m\ge4$ and $2\le i\le m-1$, and let $Q_{m,i}$ be the equivariant quotient matrix for
the caterpillar $T_m(\eta_i)$ in the orthonormal orbit basis of Section 2.
Equivalently, $Q_{m,i}$ is obtained from the symmetric quotient matrix
$Q_m(a)$ by specializing the parameter vector to $\eta_i$. Order the quotient
variables by first listing the $m-1$ spine-edge variables and then the three
pendant-edge orbit variables at $v_1,v_i,v_m$. After eliminating these three
pendant-edge variables, let $S_{m,i}$ denote the Schur complement on the
spine-edge variables. Then
\[
  \det S_{m,i}
  =
  (-1)^{m+1}\frac{B_{m,i}}{81\cdot2^{m-2}},
\]
where
\[
  B_{m,i}=8i^2-8im-8i+13m+20.
\]
If $B_{m,i}\le0$, then $\lambda_{\max}(Q_{m,i})\ge0$. If $B_{m,i}<0$, then
$\lambda_{\max}(Q_{m,i})>0$.
\end{lemma}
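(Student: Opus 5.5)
The plan is to compute $S_{m,i}$ explicitly, evaluate its determinant with a three-term recurrence, and then read off the sign conclusion from Haynsworth's inertia additivity. First write down $Q_{m,i}$. The degrees are $d_1=d_m=2$, $d_i=3$ and $d_j=2$ for every other internal spine vertex. The three pendant orbits each have $a=1$, so the pendant diagonal entries are $-(q+2)/d$: this gives $-3/2$ at $v_1$ and $v_m$, and $-4/3$ at $v_i$. The pendant block $C$ is therefore diagonal and negative definite, and the Schur complement $S_{m,i}=A-BC^{-1}B^{\mathsf T}$ is well defined.

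The corrections to $A$ are local, so $S_{m,i}$ stays tridiagonal on the spine edges $s_1,\dots,s_{m-1}$.
\begin{itemize}
\item $z_1$ couples only to $x_1$, with coefficient $1/d_1=1/2$. It adds $(1/2)^2/(3/2)=1/6$ to the $(1,1)$ entry.
\item $z_m$ adds $1/6$ to the $(m-1,m-1)$ entry in the same way.
\item $z_i$ couples to $x_{i-1}$ and $x_i$, each with coefficient $1/3$. It adds $1/12$ to each entry of the $2\times2$ block on $\{s_{i-1},s_i\}$.
\end{itemize}
Away from $v_i$ and the ends, $S_{m,i}$ has diagonal $-1$ and off-diagonal $1/2$. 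Near $v_i$ the diagonal entries of $s_{i-1}$ and $s_i$ become $-(1/2+1/3)+1/12=-3/4$, and their mutual off-diagonal becomes $1/3+1/12=5/12$. The end diagonals become $-(1/2+1/2)+1/6=-5/6$ when the neighbouring vertex has degree $2$. The cases $i=2$ and $i=m-1$, where the defect vertex is adjacent to an endpoint, need separate small adjustments; for these I would also check directly that the final formula persists.

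Next, compute $\det S_{m,i}$ by the continuant recurrence, sweeping in from both ends toward the defect. On the homogeneous chain the leading principal minors satisfy $D_k=-D_{k-1}-\tfrac14D_{k-2}$. Its characteristic root $-1/2$ is double, so $D_k=(-1/2)^k(\alpha+\beta k)$, with $\alpha,\beta$ fixed by the modified end entry $-5/6$. The same holds for the trailing minors from the other end, with length $m-1-i$.

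Expanding the determinant across the $2\times2$ defect block then expresses $\det S_{m,i}$ as a bilinear combination of these two linear-times-geometric sequences, evaluated at lengths of order $i$ and $m-i$. The result is $(-1/2)^{m}$ times a polynomial of degree at most $2$ in $i$ and $m$, and collecting constants should give $(-1)^{m+1}B_{m,i}/(81\cdot 2^{m-2})$. I expect the main obstacle to be this bookkeeping at the defect. That means getting the correct cofactor expansion through the block with entries $-3/4$ and $5/12$, together with the boundary cases, and carrying the constants exactly. As a safeguard I would verify the final formula symbolically for small $m$, for example $m=4,5$ and all admissible $i$.

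Finally, for the sign statements, Haynsworth's inertia additivity gives $\operatorname{In}(Q_{m,i})=\operatorname{In}(C)+\operatorname{In}(S_{m,i})$, where $C$ contributes three negative eigenvalues.
\begin{itemize}
\item If $\lambda_{\max}(Q_{m,i})<0$, then $S_{m,i}$ is a negative definite matrix of size $m-1$. Hence $(-1)^{m-1}\det S_{m,i}>0$, which forces $B_{m,i}>0$. Contrapositively, $B_{m,i}\le0$ gives $\lambda_{\max}(Q_{m,i})\ge0$.
\item If $B_{m,i}<0$, then $\det S_{m,i}\ne0$ and has sign opposite to $(-1)^{m-1}$. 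So $S_{m,i}$ has an odd, hence positive, number of positive eigenvalues. By inertia additivity $Q_{m,i}$ then has a positive eigenvalue, so $\lambda_{\max}(Q_{m,i})>0$.
\end{itemize}
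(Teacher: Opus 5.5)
Your setup and your closing inertia argument are both right. That covers the pendant block $\operatorname{diag}(-3/2,-4/3,-3/2)$, the corrections $1/6$, $1/12$, $1/6$, the defect entries $-3/4$ and $5/12$, and the end entries $-5/6$. Your parity count, which shows that $S_{m,i}$ has an odd number of positive eigenvalues when $B_{m,i}<0$, is a clean alternative to the paper's argument (a nonnegative eigenvalue together with $\det Q_{m,i}\neq0$).

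\textbf{The gap.} The determinant identity, which is the real content of the lemma, is never derived. You describe a two-sided expansion and then only assert that the constants collect to $B_{m,i}$. Symbolic checks at $m=4,5$ cannot prove an identity in two unbounded parameters.

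\textbf{How to close it along your route.} Put $p=i-2$ and $r=m-1-i$. Let $L_k=(-1)^k(2k+3)/(3\cdot2^k)$ be the determinant of the $k\times k$ tridiagonal matrix with diagonal $(-5/6,-1,\ldots,-1)$ and off-diagonal $1/2$. Split the tridiagonal determinant at the two couplings $1/2$ adjacent to the defect block, whose determinant is $7/18$. For $p,r\ge1$ this gives
\[
  \det S_{m,i}
  =\tfrac{7}{18}L_pL_r+\tfrac{3}{16}\bigl(L_{p-1}L_r+L_pL_{r-1}\bigr)+\tfrac1{16}L_{p-1}L_{r-1}
  =(-1)^{m+1}\frac{27-3(p+r)-8pr}{81\cdot2^{m-2}},
\]
and one checks that $27-3(p+r)-8pr=B_{m,i}$.

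For $i=2$ the left block is $\begin{pmatrix}-7/12&5/12\\ 5/12&-3/4\end{pmatrix}$. Then $\det S_{m,2}=\tfrac{19}{72}L_r+\tfrac{7}{48}L_{r-1}$, which is the same formula evaluated at $p=0$. The case $i=m-1$ follows by reversal, since $B_{m,m+1-i}=B_{m,i}$.

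\textbf{Comparison with the paper.} The paper evaluates the determinant by a single left-to-right sweep of leading minors. The substitution $U_k=(-1)^k2^kP_k$ turns the generic recurrence into $U_k=2U_{k-1}-U_{k-2}$. The two defect rows give $P_{i-1}$ and $P_i$, $U_k$ is continued affinely to row $m-2$, and the final $-5/6$ row is applied. This tracks only one sequence, and $i=2$ fits because $P_1=-7/12$ and $P_2=19/72$ match the general defect values.

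Your two-sided version buys the symmetric form $B_{m,i}=27-3(p+r)-8pr$. It makes the reflection symmetry visible and makes Corollary~\ref{cor:long-internal} immediate. Since $p+r=m-3\ge9$ for $m\ge12$, one gets $B_{m,i}\le-8pr\le0$, and $B_{m,i}<0$ once $m\ge13$.
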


\noindent The determinant identity in Lemma~\ref{lem:schur} is also checked by
the symbolic validation script described in Appendix \ref{app:boundary-check}.

\begin{proof}
With the variable ordering specified above, the quotient matrix has the block
form
\[
  Q_{m,i}=
  \begin{pmatrix}
    A & C\\
    C^{\mathsf T} & D
  \end{pmatrix},
\]
where $A$ is the spine-edge block and $D$ is the $3\times3$ block indexed by
the three pendant-edge orbit variables. The pendant block is
\[
  D=\operatorname{diag}\left(-\frac32,-\frac43,-\frac32\right),
\]
which is negative definite. The Schur complement of $D$ in $Q_{m,i}$ is
\[
  S_{m,i}
  =A-CD^{-1}C^{\mathsf T}
  =
  A+\frac16 e_1e_1^{\mathsf T}
    +\frac1{12}(e_{i-1}+e_i)(e_{i-1}+e_i)^{\mathsf T}
    +\frac16 e_{m-1}e_{m-1}^{\mathsf T},
\]
where \(e_1,\ldots,e_{m-1}\) are the standard basis vectors of
\(\mathbb R^{m-1}\). Here the eliminated block \(D\) belongs to the three
orbit variables at \(v_1,v_i,v_m\), not to the full individual pendant-edge
basis. The matrix $A$ is the tridiagonal spine block coming from the general
quotient system. Away from the internal defect, the diagonal entries are $-1$ and the
off-diagonal entries are $1/2$. At the defect one has
\[
  (S_{m,i})_{i-1,i-1}=(S_{m,i})_{i,i}=-\frac34,\qquad
  (S_{m,i})_{i-1,i}=(S_{m,i})_{i,i-1}=\frac5{12},
\]
and the two endpoints have diagonal entry $-5/6$. When $i=2$ or $i=m-1$, the
endpoint correction overlaps with one of the defect rows. In the case $i=2$,
the first two principal minors are obtained directly from the displayed
Schur-complement formula:
\[
  P_1=-\frac{7}{12},\qquad P_2=\frac{19}{72},
\]
which agree with the formulas for $P_{i-1}$ and $P_i$ below after
specializing $i=2$; from $k=3$ onward, the tridiagonal coefficients are the
same as in the generic case until the final endpoint row. The case $i=m-1$ is
the left-right reflection of $i=2$ and gives the same initial values at the
right end.

Let $P_k$ be the leading $k$-th principal minor of $S_{m,i}$. The tridiagonal
recurrence is
\[
  P_k=\alpha_kP_{k-1}-\beta_{k-1}^2P_{k-2},\qquad P_0=1,
\]
where $\alpha_k$ is the $k$-th diagonal entry and $\beta_k$ the $k$-th
off-diagonal entry. Before the defect,
\[
  P_k=(-1)^k\frac{2k+3}{3\cdot 2^k},\qquad 0\le k\le i-2.
\]
Passing through the two defect rows gives
\[
  P_{i-1}=(-1)^{i-1}\frac{2i+3}{3\cdot2^i},
  \qquad
  P_i=(-1)^i\frac{2i+53}{54\cdot2^i}.
\]
To the right of the defect the recurrence again becomes
$P_k=-P_{k-1}-P_{k-2}/4$. Writing
\[
  U_k:=(-1)^k2^kP_k,
\]
this becomes
\[
  U_k=2U_{k-1}-U_{k-2},
\]
whose characteristic polynomial $(r-1)^2$ has a double root at $r=1$. Hence
$U_k$ is affine in $k$, say $U_k=A_0+B_0k$, and therefore
\[
  P_k=\frac{(-1)^k}{2^k}(A_0+B_0k),\qquad i\le k\le m-2,
\]
with
\[
  A_0=\frac{16i^2-24i+53}{54},\qquad
  B_0=\frac{13-8i}{27}.
\]
These constants are determined by the two initial values $P_{i-1}$ and $P_i$.
The final endpoint row has diagonal entry $-5/6$, and substitution gives
\[
  P_{m-1}
  =
  (-1)^{m+1}
  \frac{8i^2-8im-8i+13m+20}{81\cdot 2^{m-2}}.
\]
This is the displayed determinant formula. By the inertia formula,
\[
  \operatorname{In}(Q_{m,i})
  =
  \operatorname{In}(D)+\operatorname{In}(S_{m,i}).
\]
Here \(\operatorname{In}(M)\) denotes the inertia of a real symmetric matrix
\(M\), namely the triple consisting of the numbers of positive, negative, and
zero eigenvalues.
Since $D$ is negative definite, the matrix $Q_{m,i}$ has a nonnegative
eigenvalue as soon as $S_{m,i}$ is not negative definite. Now, if $S_{m,i}$
were negative definite, the sign of its determinant would be
$(-1)^{m-1}$. When $B_{m,i}\le0$, the determinant is zero or has the opposite
sign, so $S_{m,i}$ is not negative definite. Hence $Q_{m,i}$ has a nonnegative
eigenvalue. If $B_{m,i}<0$, then $\det S_{m,i}\ne0$, hence also
$\det Q_{m,i}\ne0$ because $D$ is invertible. Therefore $0$ is not an
eigenvalue of $Q_{m,i}$, and the already obtained nonnegative eigenvalue is in
fact strictly positive.
\end{proof}

\begin{corollary}\label{cor:long-internal}
Let $m\ge12$, and let $T_m(a)$ be a caterpillar. If
\[
  \lambda_{\max}(R_{T_m(a)})<0,
\]
then
\[
  a_i=0,\qquad 2\le i\le m-1.
\]
\end{corollary}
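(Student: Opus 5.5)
The proof goes by contraposition. Fix $m\ge12$ and suppose $a_i\ge1$ for some $2\le i\le m-1$. We will show $\lambda_{\max}(R_{T_m(a)})\ge0$. Since $a_1,a_m\ge1$ in canonical form, we have $\eta_i\le a$ coordinatewise. Hence $T_m(a)$ is obtained from $T_m(\eta_i)$ by successively attaching pendant edges at spine vertices. Each intermediate tree is again a caterpillar with the same spine order $m$.

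By Proposition~\ref{prop:upper-closed}, applied along this chain, it suffices to prove that $\lambda_{\max}(R_{T_m(\eta_i)})\ge0$. Section~2 shows that $\lambda_{\max}(R_{T_m(\eta_i)})=\lambda_{\max}(Q_{m,i})$, because the Perron vector lies in the orbit-invariant subspace. Lemma~\ref{lem:schur} then reduces the claim to the inequality
\[
  B_{m,i}=8i^2-8im-8i+13m+20\le 0
  \qquad\text{for all } m\ge12,\ 2\le i\le m-1 .
\]

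To check this inequality, view $B_{m,i}$ as a convex quadratic in $i$. On the interval $[2,m-1]$ its maximum is attained at an endpoint. At $i=2$ we get
\[
  B_{m,2}=32-16m-16+13m+20=36-3m .
\]
The reflection $i\mapsto m+1-i$ leaves $B$ invariant, since $8i^2-8i(m+1)$ is symmetric about $i=(m+1)/2$. So the value at $i=m-1$ is the same, $36-3m$.

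For $m\ge12$ this gives $B_{m,i}\le 36-3m\le0$, with equality only at $m=12$ and $i\in\{2,11\}$. Even in the equality case Lemma~\ref{lem:schur} still gives $\lambda_{\max}\ge0$, which is all we need. Hence $\lambda_{\max}(R_{T_m(a)})\ge0$, contradicting the hypothesis, and therefore $a_i=0$ for all $2\le i\le m-1$.

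The only delicate point is the justification of the monotone chain. Proposition~\ref{prop:upper-closed} allows attachment at any vertex, so adding leaves to spine vertices is fine, and the order of the additions is irrelevant. We also need the Lemma's hypothesis $m\ge4$, which holds trivially. The bulk of the work is already contained in Lemma~\ref{lem:schur}; the corollary itself is just the endpoint evaluation of $B_{m,i}$.
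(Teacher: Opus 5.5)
Your proposal is correct and follows the paper's own proof essentially step for step. The paper's ingredients are the minimal defect $\eta_i\le a$, the upward closure from Proposition~\ref{prop:upper-closed}, and Lemma~\ref{lem:schur} together with convexity of $B_{m,i}$ in $i$ and the endpoint values $B_{m,2}=B_{m,m-1}=36-3m\le0$; your reflection argument $i\mapsto m+1-i$ and your explicit treatment of the equality case $m=12$, $i\in\{2,11\}$ are only cosmetic additions to that argument.
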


\begin{proof}
For $2\le i\le m-1$, the quadratic polynomial
\[
  B_{m,i}=8i^2-8im-8i+13m+20
\]
is convex in $i$, hence attains its maximum on the interval
\[
  2\le i\le m-1
\]
at one of the two endpoints. A direct computation gives
\[
  B_{m,2}=36-3m,\qquad B_{m,m-1}=36-3m,
\]
and therefore
\[
  B_{m,i}\le0,\qquad 2\le i\le m-1,
\]
for every $m\ge12$. By Lemma \ref{lem:schur}, each minimal internal defect
$T_m(\eta_i)$ then satisfies
\[
  \lambda_{\max}(R_{T_m(\eta_i)})\ge0.
\]
Finally, Proposition \ref{prop:upper-closed} implies that the nonnegative
region is upward closed. Hence every caterpillar containing an internal defect
is also nonnegative. Therefore a caterpillar with
\[
  \lambda_{\max}(R_{T_m(a)})<0
\]
cannot have any internal pendant edge, that is,
\[
  a_i=0,\qquad 2\le i\le m-1.
\]
\end{proof}

\begin{corollary}[Long-spine zero-layer reduction]\label{cor:long-zero-reduction}
Let $m\ge13$, and let $T_m(a)$ be a caterpillar. If $a_i>0$ for some
$2\le i\le m-1$, then
\[
  \lambda_{\max}(R_{T_m(a)})>0.
\]
Consequently, every caterpillar with $m\ge13$ and
$\lambda_{\max}(R_{T_m(a)})\le0$ is endpoint-only.
\end{corollary}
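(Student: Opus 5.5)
The argument reruns the proof of Corollary~\ref{cor:long-internal} with the strict part of Lemma~\ref{lem:schur} and strict propagation under leaf attachment.

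\emph{Step 1: strict sign of $B_{m,i}$.} Fix $m\ge13$ and $2\le i\le m-1$. The polynomial
\[
B_{m,i}=8i^2-8im-8i+13m+20
\]
is convex in $i$, so on $[2,m-1]$ it is maximized at an endpoint. Its endpoint values are
\[
B_{m,2}=B_{m,m-1}=36-3m\le -3<0 .
\]
Hence $B_{m,i}<0$ for every admissible $i$. By the strict clause of Lemma~\ref{lem:schur}, $\lambda_{\max}(Q_{m,i})>0$. The Perron eigenvalue of $R_{T_m(\eta_i)}$ is the top eigenvalue of the orbit quotient $Q_{m,i}$, as recorded in Section~2. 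Therefore
\[
\lambda_{\max}(R_{T_m(\eta_i)})>0 .
\]

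\emph{Step 2: propagate to $T_m(a)$.} Suppose $a_i>0$ for some $2\le i\le m-1$. Since $a_1,a_m\ge1$, we have $\eta_i\le a$ coordinatewise. So $T_m(a)$ is obtained from $T_m(\eta_i)$ by successively attaching pendant edges at spine vertices, with the spine order staying $m$. Applying the strict part of Proposition~\ref{prop:upper-closed} at each attachment gives $\lambda_{\max}(R_{T_m(a)})>0$.

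\emph{Step 3: consequence.} The "consequently" statement is the contrapositive of Step~2. If $m\ge13$ and $\lambda_{\max}(R_{T_m(a)})\le0$, then $a_i=0$ for all $2\le i\le m-1$, so $T_m(a)$ is endpoint-only.

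The only delicate point is the threshold: $m=12$ gives $B_{12,2}=0$, which yields only the non-strict conclusion. This is why the corollary is stated for $m\ge13$. The $m=12$ zero cases are left to the finite verification. Everything else is a direct application of earlier results.
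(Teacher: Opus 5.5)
Your proposal is correct and follows essentially the same route as the paper's proof. Convexity of $B_{m,i}$ in $i$ gives $B_{m,i}\le 36-3m<0$ for $m\ge13$; the strict clause of Lemma~\ref{lem:schur} then makes the minimal defect $T_m(\eta_i)$ strictly positive, and the strict part of Proposition~\ref{prop:upper-closed} carries this along the leaf attachments to $T_m(a)$. Your remark that $m=12$ yields $B_{12,2}=0$, and hence only the non-strict conclusion, correctly explains the threshold and matches the $m=12$ entry in Table~\ref{tab:zero-exceptions}.
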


\begin{proof}
If $a_i>0$ for some internal index $i$, then $T_m(a)$ contains the minimal
internal defect $T_m(\eta_i)$. For $m\ge13$, the endpoint computation above
gives $B_{m,i}\le 36-3m<0$, so Lemma \ref{lem:schur} yields
\[
  \lambda_{\max}(R_{T_m(\eta_i)})>0.
\]
Applying the strict part of Proposition \ref{prop:upper-closed} along any
sequence of leaf attachments from $T_m(\eta_i)$ to $T_m(a)$ gives
\[
  \lambda_{\max}(R_{T_m(a)})>0.
\]
The final assertion is immediate.
\end{proof}

\subsection{Endpoint families}

\begin{proposition}[Endpoint-only families]\label{prop:endpoint-only}
Let
\[
  C_m(a,b)=T_m(a,0,\ldots,0,b).
\]
For $m\ge12$,
\[
  \lambda_{\max}(R_{C_m(a,b)})<0
  \quad\Longleftrightarrow\quad
  1\le a,b\le3,\quad (a,b)\ne(3,3).
\]
Moreover, $C_m(3,3)$ satisfies $\lambda_{\max}(R_{C_m(3,3)})=0$ for every
$m\ge2$.
\end{proposition}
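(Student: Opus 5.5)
The plan is to combine the downward/upward closure from Proposition~\ref{prop:upper-closed} with a tridiagonal Schur-complement computation modelled on Lemma~\ref{lem:schur}.

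\textbf{Step 1: reduction to three test families.} Within a fixed spine order $m$, attaching a leaf at $v_1$ or $v_m$ is a pendant attachment. By Proposition~\ref{prop:upper-closed}, the nonnegative region is upward closed in $(a,b)$, so the negative region is downward closed. The set $\{1\le a,b\le3\}\setminus\{(3,3)\}$ is exactly the downward closure of $(3,2)$ and $(2,3)$. Its complement among the endpoint parameters is the upward closure of $(4,1)$, $(1,4)$ and $(3,3)$. By reversal symmetry it therefore suffices to prove three things:
\begin{itemize}[label={}]
\item (i) $\lambda_{\max}(R_{C_m(3,3)})=0$ for all $m\ge2$;
\item (ii) $\lambda_{\max}(R_{C_m(3,2)})<0$ for all $m\ge12$;
\item (iii) $\lambda_{\max}(R_{C_m(4,1)})\ge0$ for all $m\ge12$.
\end{itemize}

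\textbf{Step 2: the zero family (i).} I would exhibit an explicit positive zero vector in the common-value coordinates: $x_j=3$ on every spine edge and $y_1=y_m=1$ on the pendant edges. An interior spine edge has $X_j=X_{j+1}=2x$ at its degree-two endpoints, so its equation gives $0$. The end spine edge gives $(3y-x)/4=0$. Each pendant equation gives $-y+(x+y)/4=(x-3y)/4=0$. For $m=2$ the single spine edge sees $(3y-x)/4$ twice, which again vanishes. Perron--Frobenius then forces $\lambda_{\max}=0$.

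\textbf{Step 3: setting up (ii) and (iii).} I would eliminate the two endpoint pendant orbits as in Lemma~\ref{lem:schur}. For an endpoint carrying $a$ leaves, the pendant diagonal entry is $-3/(a+1)$ and the coupling to the end spine edge is $\sqrt a/(a+1)$. The Schur correction to the end spine row is therefore $a/(3(a+1))$, giving the spine diagonal entry
\[
  \alpha(a)=-\frac1{a+1}-\frac12+\frac{a}{3(a+1)}.
\]
This gives $\alpha(3)=-\tfrac12$, $\alpha(2)=-\tfrac{11}{18}$, $\alpha(4)=-\tfrac{13}{30}$ and $\alpha(1)=-\tfrac56$. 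All interior diagonal entries are $-1$ and all off-diagonal entries are $\tfrac12$.

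With $U_k=(-1)^k2^kP_k$ for the leading principal minors $P_k$, the interior recurrence is $U_k=2U_{k-1}-U_{k-2}$, so $U_k$ is affine in $k$ with $U_0=1$ and $U_1=-2\alpha(a)$. The last row gives
\[
  U_{m-1}=-2\alpha(b)\,U_{m-2}-U_{m-3}.
\]
Negative definiteness of the Schur complement is equivalent, by Sylvester's criterion, to $U_k>0$ for all $k$. By the inertia additivity used in Lemma~\ref{lem:schur}, this is in turn equivalent to $Q_m$ being negative definite, since the pendant block is negative definite.

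\textbf{Step 4: carrying out (ii) and (iii).}
\begin{itemize}[label={}]
\item For $(3,2)$: $U_1=1$, so $U_k\equiv1$ for $k\le m-2$, and $U_{m-1}=\tfrac{11}{9}-1=\tfrac29>0$. Hence $\lambda_{\max}<0$ for every $m\ge3$, and in particular for $m\ge12$.
\item For $(4,1)$: $U_k=1-2k/15$, which is negative once $k\ge8$. For $m\ge12$ the index $k=8\le m-2$ occurs, so some leading minor has the wrong sign. The Schur complement is then not negative definite, and $Q_m$ has a nonnegative eigenvalue.
\end{itemize}

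\textbf{Main obstacle.} The main risk is bookkeeping rather than any conceptual step. I expect the hardest part to be verifying the Schur-correction entries $\alpha(a)$ carefully against the symmetric orbit matrix of Section~2, and checking the last-row recurrence at the boundary. I would cross-check $\alpha(a)$ and the last-row recurrence against the $(3,3)$ case: there $U_k\equiv1$ and $U_{m-1}=1-1=0$, which correctly reproduces the zero eigenvalue found in Step 2.
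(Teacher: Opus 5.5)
Your proposal is correct and uses the same core argument as the paper: a Schur complement onto the spine edges with endpoint diagonal $-(a+9)/(6(a+1))$, Sylvester's criterion applied to the affine sequence $U_k$, and the explicit positive null vector (weight $3$ on spine edges, $1$ on leaves) for $(3,3)$. The one difference is that you use the leaf-attachment monotonicity of Proposition~\ref{prop:upper-closed} to reduce to the test pairs $(3,2)$ and $(4,1)$ (up to reversal), whereas the paper computes the minors $r_k$ and $t_n$ in closed form for general $(a,b)$ and needs no monotonicity step.
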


\begin{proof}
We prove the assertion directly from the caterpillar quotient system in
Section 2, specialized to the endpoint-only parameter vector
\[
  (a,0,\ldots,0,b).
\]
At energy $\lambda=0$, we eliminate the two endpoint pendant-edge orbit
variables and study the resulting Schur complement on the spine-edge variables.
Let $n=m-1$ be the number of spine edges. In the symmetric orbit quotient, the
two eliminated endpoint orbit variables have diagonal entries $-3/(a+1)$ and
$-3/(b+1)$, so the eliminated block is negative definite. The
sibling-difference directions in the full edge space have eigenvalues
$-(a+3)/(a+1)$ and $-(b+3)/(b+1)$, respectively, and are already negative;
hence they do not affect the spectral top.

The Schur complement on the spine-edge variables is the tridiagonal matrix
$S_n(a,b)$ with off-diagonal entries $1/2$, interior diagonal entries $-1$, and
endpoint diagonal entries
\[
  \alpha_a=-\frac{a+9}{6(a+1)},\qquad
  \alpha_b=-\frac{b+9}{6(b+1)}.
\]
Thus $C_m(a,b)$ has $\lambda_{\max}(R_{C_m(a,b)})<0$ if and only if $S_n(a,b)$ is
negative definite.

Let $P_k$ be the leading $k$-th principal minor of $S_n(a,b)$. For
$1\le k\le n-1$,
\[
  P_k=(-1)^k2^{-k}r_k,\qquad
  r_k=1+\frac{k}{3}\rho_a,
  \qquad
  \rho_a=\frac{2(3-a)}{a+1}.
\]
For the final minor,
\[
  P_n=(-1)^n2^{-n}t_n,\qquad
  t_n=\frac13\left[
    \rho_a-\sigma_b\left(1+\frac{n-1}{3}\rho_a\right)
  \right],
\]
where
\[
  \sigma_b=\frac{2(b-3)}{b+1}.
\]
These formulae follow from the recurrence
$P_k=\alpha_kP_{k-1}-(1/2)^2P_{k-2}$, with the last step using $\alpha_b$
instead of the interior diagonal $-1$.

By Sylvester's criterion, $S_n(a,b)$ is negative definite if and only if
$(-1)^kP_k>0$ for all $1\le k\le n$. Suppose $m\ge12$, so $n-1\ge10$. If
$a\ge4$, then $\rho_a\le -2/5$, and hence
\[
  r_{n-1}=1+\frac{n-1}{3}\rho_a
  \le 1-\frac{10}{3}\cdot\frac25<0,
\]
so the matrix is not negative definite. By symmetry, the same conclusion holds
if $b\ge4$.

It remains to consider $1\le a,b\le3$. Then $\rho_a\ge0$ and
$\sigma_b\le0$, so every $r_k$ is positive and $t_n>0$ unless
$\rho_a=\sigma_b=0$, which is precisely $(a,b)=(3,3)$. Hence all these endpoint
families are negative except $(3,3)$.

For $(a,b)=(3,3)$, the eliminated spine matrix has endpoint diagonals $-1/2$,
interior diagonals $-1$, and off-diagonals $1/2$. The constant spine vector
lies in its kernel. Equivalently, in the original tree, putting the spine-edge
weights equal to $3$ and all pendant-edge weights equal to $1$ gives
$R_Tw=0$. Since $w>0$, Perron--Frobenius gives $\lambda_{\max}(R_T)=0$.
\end{proof}

\begin{corollary}[Long-spine zero endpoint family]\label{cor:long-zero-endpoint}
Let $m\ge13$. Among endpoint-only caterpillars $C_m(a,b)$, one has
\[
  \lambda_{\max}(R_{C_m(a,b)})=0
  \quad\Longleftrightarrow\quad
  (a,b)=(3,3).
\]
Equivalently, among endpoint-only caterpillars with $m\ge13$, the zero level
set consists exactly of the family $(3,0,\ldots,0,3)$.
\end{corollary}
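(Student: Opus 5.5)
We reuse the Schur-complement computation from the proof of Proposition~\ref{prop:endpoint-only}, but now track when the sign conditions become equalities. Set $n=m-1\ge12$. The endpoint pendant block is negative definite, and the sibling-difference directions have strictly negative eigenvalues. By the inertia formula, $\lambda_{\max}(R_{C_m(a,b)})\le 0$ holds if and only if $S_n(a,b)$ is negative semidefinite. Moreover, $\lambda_{\max}=0$ holds exactly when $S_n(a,b)$ is negative semidefinite and singular. The direction ($\Leftarrow$) is already the last part of Proposition~\ref{prop:endpoint-only}: the constant spine vector is a positive zero vector for $(3,3)$, so $\lambda_{\max}=0$ for every $m\ge2$. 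The work is therefore the direction ($\Rightarrow$).

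For ($\Rightarrow$), suppose $\lambda_{\max}(R_{C_m(a,b)})=0$. If $1\le a,b\le3$ and $(a,b)\ne(3,3)$, Proposition~\ref{prop:endpoint-only} gives $\lambda_{\max}<0$, so that case is excluded. It remains to rule out $a\ge4$; the case $b\ge4$ follows by reflection. For $a\ge4$ we have $\rho_a\le-2/5$, so with $n-1\ge11$,
\[
  r_{n-1}=1+\tfrac{n-1}{3}\rho_a\le 1-\tfrac{11}{3}\cdot\tfrac25<0 .
\]
Thus the leading $(n-1)$-minor $P_{n-1}=(-1)^{n-1}2^{-(n-1)}r_{n-1}$ has the sign opposite to $(-1)^{n-1}$. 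A negative semidefinite matrix has every principal minor of order $k$ either zero or of sign $(-1)^k$. Hence $S_n(a,b)$ is not negative semidefinite, so it has a positive eigenvalue, and therefore $\lambda_{\max}>0$. This contradicts $\lambda_{\max}=0$.

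The only delicate point is the boundary value $m=12$. There $n-1=10$ gives $r_{10}=1+\tfrac{10}{3}\rho_a$, which vanishes exactly when $\rho_a=-3/10$. Solving $\rho_a=-3/10$ gives $a=63/17$, which is not an integer, so equality cannot occur for integer $a$. The minor could therefore only fail to be strict through a genuine zero, and even that does not happen. This explains why the proposition holds cleanly for $m\ge12$. For the present statement with $m\ge13$, the inequality $r_{n-1}<0$ is strict with room to spare, so the semidefinite criterion via a single principal minor of the wrong strict sign settles everything. I expect no real obstacle beyond correctly invoking the semidefinite (not definite) version of Sylvester's criterion: a negative semidefinite matrix must have every principal minor of order $k$ zero or of sign $(-1)^k$, and this is exactly the fact we use above.
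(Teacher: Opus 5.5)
Your proof is correct and follows the paper's argument essentially verbatim: you pass to the negative semidefinite Schur complement $S_n(a,b)$, use $r_{n-1}<0$ for $a\ge4$ (and, by reflection, for $b\ge4$) to contradict $(-1)^kP_k\ge0$, and invoke Proposition~\ref{prop:endpoint-only} for $1\le a,b\le3$. Your aside about $m=12$ is unnecessary: for $a\ge4$ one already has $r_{10}\le 1-\tfrac{10}{3}\cdot\tfrac25=-\tfrac13<0$, so the root $a=63/17$ of $r_{10}$ never enters.
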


\begin{proof}
Let $n=m-1\ge12$ and let $S_n(a,b)$ be the Schur complement from the proof of
Proposition \ref{prop:endpoint-only}. If
\[
  \lambda_{\max}(R_{C_m(a,b)})=0,
\]
then the symmetric orbit matrix $Q_m(a,b)$ is negative semidefinite. Since the
eliminated endpoint blocks are negative definite, the Schur complement
$S_n(a,b)$ is also negative semidefinite. Since every principal submatrix of a
negative semidefinite matrix is again negative semidefinite, each leading
principal submatrix of $S_n(a,b)$ has determinant $P_k$, and therefore
\[
  (-1)^kP_k\ge0,\qquad 1\le k\le n.
\]

Suppose first that $a\ge4$. Then \(\rho_a\le -2/5\), so
\[
  r_{n-1}=1+\frac{n-1}{3}\rho_a
  \le 1-\frac{11}{3}\cdot\frac25<0
\]
because \(n-1\ge11\). Hence
\[
  (-1)^{n-1}P_{n-1}=2^{-(n-1)}r_{n-1}<0,
\]
contrary to negative semidefiniteness. By symmetry, one also cannot have
$b\ge4$. Thus any endpoint-only zero example with $m\ge13$ must satisfy
$1\le a,b\le3$.

For \(1\le a,b\le3\), Proposition \ref{prop:endpoint-only} already shows that
every family other than \((a,b)=(3,3)\) is strictly negative, while
\((3,3)\) has zero largest eigenvalue. Therefore
\[
  \lambda_{\max}(R_{C_m(a,b)})=0
  \quad\Longleftrightarrow\quad
  (a,b)=(3,3).
\]
\end{proof}

For long spines, Corollary \ref{cor:long-internal} leaves only these endpoint
families. Proposition \ref{prop:endpoint-only} therefore gives the stable
zero-eigenvalue family
\[
  (3,0,\ldots,0,3).
\]

\begin{remark}
Once the zero-eigenvalue family $(3,0,\ldots,0,3)$ is known, Proposition
\ref{prop:upper-closed} and Corollary \ref{cor:strict-zero} already determine
the sign of all endpoint families that are coordinatewise comparable with it.
However, this does not cover the whole endpoint region, because families such as
$C_m(4,1)$ or $C_m(5,3)$ are not coordinatewise comparable with $C_m(3,3)$.
For this reason, Proposition \ref{prop:endpoint-only} cannot be replaced by a
direct corollary of the zero-eigenvalue family and requires the Schur-complement analysis
above.
\end{remark}

\begin{figure}[t]
\centering
\begin{tikzpicture}[
  scale=.95,
  vertex/.style={circle,fill=black,inner sep=1.55pt},
  every node/.style={font=\small}
]
  \node[vertex] (u) at (0,0) {};
  \node[vertex] (p1) at (1.2,0) {};
  \node[vertex] (p2) at (2.4,0) {};
  \node[vertex] (q1) at (5.0,0) {};
  \node[vertex] (q2) at (6.2,0) {};
  \node[vertex] (v) at (7.4,0) {};
  \draw (u)--(p1)--(p2);
  \draw[densely dotted,thick] (p2)--(q1);
  \draw (q1)--(q2)--(v);

  \foreach \ang/\name in {130/Lone,180/Ltwo,230/Lthree} {
    \node[vertex] (\name) at ($(u)+(\ang:.82)$) {};
    \draw (u)--(\name);
  }
  \foreach \ang/\name in {50/Rone,0/Rtwo,-50/Rthree} {
    \node[vertex] (\name) at ($(v)+(\ang:.82)$) {};
    \draw (v)--(\name);
  }
  \node at (3.7,.18) {$\cdots$};
  \node[below=7pt] at (3.7,0) {$k$ spine edges, $k\ge1$};
\end{tikzpicture}
\caption{The stable zero-eigenvalue family $(3,0,\ldots,0,3)$, obtained by subdividing the
central edge of the double star $S_{3,3}$.}
\label{fig:stable-zero-family}
\end{figure}

Together with Corollary \ref{cor:long-internal}, this proves the long-spine
part of Theorem \ref{thm:main}.

\section{Short-Spine Verification}

After the separate treatments of $m=1$, $m=2$, and $m\ge12$, the only
remaining range is $3\le m\le11$. In this finite regime, downward closure
reduces the problem to the maximal negative elements. They are listed in
Table \ref{tab:maximal-negative}. The $m=12$ row is included as the first
instance of the long-spine endpoint rule and as a consistency check for the
finite enumeration.

More explicitly, for a fixed spine order $m$ let
\[
  \mathcal N_m=\{a:\lambda_{\max}(R_{T_m(a)})<0\}.
\]
Proposition \ref{prop:upper-closed} says that the complement of
$\mathcal N_m$ is upward closed, and therefore $\mathcal N_m$ is downward
closed. Hence the finite table is organized by the maximal elements of
$\mathcal N_m$ with respect to the coordinatewise partial order: once these are known, the whole negative region is exactly
their coordinatewise downward closure. To determine the zero level set, we also
look at the first nonnegative boundary
\[
  \partial_+\mathcal N_m
  =
  \{a\notin\mathcal N_m:\text{ every immediate predecessor of }a
  \text{ lies in }\mathcal N_m\}
\]
in the same finite range. This boundary is listed in
Appendix \ref{app:min-boundary}.

The finite verification is carried out by the deterministic procedure below. Here
$\overline a$ denotes the canonical representative of $a$ up to reversal.
\begin{enumerate}[label=\textup{(\arabic*)}]
  \item Start from the minimal canonical parameter
  \[
    a^{(0)}=(1,0,\ldots,0,1).
  \]
  \item For every parameter $a$ reached by the search, compute the sign of
  $\lambda_{\max}(R_{T_m(a)})$.
  \item If $\lambda_{\max}(R_{T_m(a)})<0$, record $a$ as negative and add all
  canonical children
  \[
    \overline{a+e_i},\qquad 1\le i\le m,
  \]
  to the search queue.
  \item If $\lambda_{\max}(R_{T_m(a)})\ge0$, stop in that direction; by
  Proposition \ref{prop:upper-closed}, every coordinatewise larger parameter is
  also nonnegative.
\end{enumerate}
Since $\mathcal N_m$ is downward closed, every negative parameter is reached by
this procedure: along any coordinatewise path from $(1,0,\ldots,0,1)$ to a
negative parameter, all intermediate parameters are also negative. When the
procedure terminates, the recorded negative parameters with no negative child
are exactly the maximal elements of $\mathcal N_m$.

For a fixed caterpillar parameter $a$, the restricted operator from Section 2
has rational entries in the coordinate basis \((x_j,y_i)\). The symmetric
orthonormal orbit matrix \(Q_m(a)\) from Section 2 is obtained from this
coordinate matrix by a positive diagonal similarity, so both matrices have the
same spectrum. Concretely, if \(M_m(a)\) denotes the common-value coordinate
matrix and \(D\) is diagonal with entries \(1\) on the spine coordinates and
\(\sqrt{a_i}\) on each pendant-orbit coordinate corresponding to a vertex with
\(a_i>0\), then
\[
  Q_m(a)=DM_m(a)D^{-1}.
\]
The remaining eigenspaces come from differences of sibling pendant edges and
have negative eigenvalues. Hence the inequality
\[
  \lambda_{\max}(R_{T_m(a)})<0
\]
is equivalent to negative definiteness of the symmetric orbit matrix \(Q_m(a)\).
For the finite verification we instead use an exact rational computation on the
coordinate matrix of this restricted operator. For each parameter \(a\), we form
the characteristic polynomial over \(\mathbb Q\), determine the multiplicity of
the root \(\lambda=0\), and use Sturm root counts to count the roots in the
positive half-line \((0,\infty)\). ThusThus
\[
  \lambda_{\max}(R_{T_m(a)})<0
  \iff
  0\text{ is not a root of }\chi_a\ \text{and}\ \chi_a\text{ has no root in }(0,\infty),
\]
\[
  \lambda_{\max}(R_{T_m(a)})=0
  \iff
  0\text{ is a root of }\chi_a\text{ and }\chi_a\text{ has no root in }(0,\infty),
\]
and
\[
  \lambda_{\max}(R_{T_m(a)})>0
  \iff
  \chi_a\text{ has a root in }(0,\infty).
\]
These alternatives are therefore decided exactly by the zero multiplicity at
\(\lambda=0\) together with the Sturm root count on \((0,\infty)\). Thus, for
each entry $a$ in
Table \ref{tab:maximal-negative}, we check two finite conditions:
\[
  \lambda_{\max}(R_{T_m(a)})<0,
\]
and, for every $1\le i\le m$,
\[
  \lambda_{\max}(R_{T_m(\overline{a+e_i})})\ge0 .
\]
The first condition says that $a$ lies in the negative region; the second says
that it is maximal in that region. The same exact rational sign computation is
used for the first nonnegative boundary. The entries in
Table \ref{tab:zero-exceptions} have explicit positive null vectors, listed in
Appendix \ref{app:zero-vectors}. Since these null vectors are strictly positive,
Perron--Frobenius implies that the corresponding zero eigenvalue is the largest
eigenvalue. The other boundary points in Appendix \ref{app:min-boundary} have
strictly positive largest eigenvalue.

Thus the finite verification is exact and reproducible. It uses exact rational
arithmetic and Sturm root counts rather than floating-point tests, together
with the monotonicity theorem as the stopping rule. Every child
\[
  \overline{a+e_i}
\]
is checked explicitly by an exact rational computation on the quotient matrix.
The verification code and output files are described in
Appendix \ref{app:boundary-check}.

\begin{table}[t]
\centering
\small
\begin{tabular}{c p{.78\linewidth}}
\toprule
$m$&Maximal negative elements\\
\midrule
3&
$(1,1,2)$, $(1,3,1)$, $(2,0,4)$, $(1,0,8)$\\
4&
$(1,0,2,1)$, $(1,1,0,2)$, $(1,0,0,5)$, $(2,0,0,4)$\\
5&
$(1,0,1,0,1)$, $(1,1,0,0,2)$, $(1,0,0,0,4)$, $(2,0,0,0,3)$\\
6&
$(1,0,0,1,0,1)$, $(1,1,0,0,0,2)$, $(1,0,0,0,0,4)$, $(2,0,0,0,0,3)$\\
7&
$(1,1,0,0,0,0,2)$, $(1,0,0,0,0,0,4)$, $(2,0,0,0,0,0,3)$\\
8&
$(1,1,0,0,0,0,0,2)$, $(2,0,0,0,0,0,0,3)$\\
9&
$(1,0,0,0,0,0,0,1,1)$, $(2,0,0,0,0,0,0,0,3)$\\
10&
$(1,0,0,0,0,0,0,0,1,1)$, $(2,0,0,0,0,0,0,0,0,3)$\\
11&
$(1,0,0,0,0,0,0,0,0,1,1)$, $(2,0,0,0,0,0,0,0,0,0,3)$\\
12&
$(2,0,0,0,0,0,0,0,0,0,0,3)$\\
\bottomrule
\end{tabular}
\caption{Maximal elements of the negative caterpillar downsets. Each listed
parameter satisfies $\lambda_{\max}<0$, whereas increasing any single
coordinate yields $\lambda_{\max}\ge0$. For $3\le m\le11$, the full negative
region is the coordinatewise downward closure of these maximal elements; the
$m=12$ row matches the long-spine classification.}
\label{tab:maximal-negative}
\end{table}

\begin{table}[t]
\centering
\small
\begin{tabular}{c l}
\toprule
$m$&Exceptional caterpillar zero parameters\\
\midrule
2&$(2,5)$\\
3&$(1,4,1)$, $(1,0,9)$\\
4&$(1,0,0,6)$\\
5&$(2,0,0,0,4)$, $(1,0,0,0,5)$\\
8&$(1,0,0,0,0,0,0,4)$\\
9&$(1,1,0,0,0,0,0,0,2)$\\
12&$(1,0,0,0,0,0,0,0,0,0,1,1)$\\
\bottomrule
\end{tabular}
\caption{Finite exceptional caterpillars in the zero level set. Together with
the stable family $(3,0,\ldots,0,3)$, they exhaust all caterpillars with
$\lambda_{\max}(R_T)=0$.}
\label{tab:zero-exceptions}
\end{table}

\subsection{Zero examples}

After the long-spine analysis, only the short spines
\[
  3\le m\le12
\]
must be checked. In this range, Proposition \ref{prop:upper-closed} controls
the search space, so the negative region is determined by the maximal elements
in Table \ref{tab:maximal-negative}, while the zero classification is read off
from the first nonnegative boundary recorded in
Appendix \ref{app:min-boundary}. Indeed, if an immediate predecessor of a zero
parameter were nonnegative, then adding the corresponding leaf would make the
new parameter strictly positive by Proposition \ref{prop:upper-closed} and
Corollary \ref{cor:strict-zero}, contradicting \(\lambda_{\max}=0\).

The double-star family gives an illustrative model for the phase transition. By
Example \ref{ex:double-star}, the sign is governed by the single
threshold
\[
  \lambda_{\max}(R_{T_2(a,b)})<0 \iff (a-1)(b-1)<4,
\]
with equality
\[
  \lambda_{\max}(R_{T_2(a,b)})=0 \iff (a-1)(b-1)=4.
\]
Thus the zero-eigenvalue points are
\[
  (a,b)=(3,3),\ (2,5),\ (5,2),
\]
up to the symmetry interchanging the two ends. This is the first appearance of
the stable family: subdividing the central edge of $(3,3)$ gives the zero
caterpillars
\[
  (3,0,\ldots,0,3).
\]

\begin{figure}[htbp]
\centering
\begin{tikzpicture}[scale=.74]
  \draw[->] (0.6,0.6)--(7.8,0.6) node[right] {$a$};
  \draw[->] (0.6,0.6)--(0.6,7.8) node[above] {$b$};
  \foreach \x in {1,...,7} {
    \draw (\x,0.54)--(\x,0.66) node[below=3pt] {\scriptsize \x};
    \draw (0.54,\x)--(0.66,\x) node[left=3pt] {\scriptsize \x};
  }
  \draw[thick] plot[domain=1.67:7.4,samples=90]
    (\x,{1+4/(\x-1)});
  \node[align=center] at (2.15,1.95) {\scriptsize $\lambda_{\max}<0$};
  \node[align=center] at (5.85,5.95) {\scriptsize $\lambda_{\max}>0$};
  \node[align=center] at (4.85,2.75) {\scriptsize $(a-1)(b-1)=4$};
  \foreach \p/\q in {3/3,2/5,5/2} {
    \fill (\p,\q) circle (2.2pt);
  }
  \node[font=\scriptsize,above right=2pt] at (3,3) {$(3,3)$};
  \node[font=\scriptsize,above left=2pt] at (2,5) {$(2,5)$};
  \node[font=\scriptsize,below right=2pt] at (5,2) {$(5,2)$};
\end{tikzpicture}
\caption{Phase diagram for the double-star family $T_2(a,b)$. The curve
$(a-1)(b-1)=4$ separates the negative and positive regions, and the marked
lattice points are exactly the zero cases.}
\label{fig:double-star-phase}
\end{figure}

These examples also show that symmetry alone is not the correct organizing
principle for \(\lambda_{\max}\). The symmetric double star \((3,3)\) lies in
the zero level set, whereas highly asymmetric double stars such as
\[
  (1,n-3)
\]
still remain in the negative region.

For the zero level set, the long-spine part is already settled by
Corollaries \ref{cor:long-zero-reduction} and \ref{cor:long-zero-endpoint}, so
every remaining zero caterpillar must lie in the finite range \(m\le12\).
Corollary \ref{cor:strict-zero} places such cases on the first nonnegative
boundary, and the exact verification shows that the zero points there are
precisely those in Table \ref{tab:zero-exceptions}. Therefore the zero level
set consists of the stable family \((3,0,\ldots,0,3)\), the nine exceptional
short-spine caterpillars, and the single non-caterpillar tree \(S_3^2\).

\begin{figure}[htbp]
\centering
\begin{tikzpicture}[
  vertex/.style={circle,fill=black,inner sep=1.45pt},
  title/.style={font=\small\bfseries,align=center},
  note/.style={font=\scriptsize,align=center},
  >=stealth
]
  \node[title] at (0,2.8) {Zero level set\\$\lambda_{\max}(R_T)=0$};

  \node[title] at (-5.6,1.5) {stable caterpillars};
  \node[title] at (0,1.5) {finite caterpillar\\exceptions};
  \node[title] at (5.6,1.5) {non-caterpillar\\example};

  \draw[->] (-.55,2.4)--(-4.95,1.82);
  \draw[->] (0,2.32)--(0,1.84);
  \draw[->] (.55,2.4)--(4.95,1.82);

  \begin{scope}[shift={(-5.6,-0.05)}]
    \node[vertex] (u) at (-1.25,0) {};
    \node[vertex] (x) at (-.45,0) {};
    \node[vertex] (y) at (.45,0) {};
    \node[vertex] (v) at (1.25,0) {};
    \draw (u)--(x);
    \draw[densely dotted,thick] (x)--(y);
    \draw (y)--(v);
    \foreach \ang/\name in {125/a,180/b,235/c} {
      \node[vertex] (\name) at ($(u)+(\ang:.55)$) {};
      \draw (u)--(\name);
    }
    \foreach \ang/\name in {55/d,0/e,-55/f} {
      \node[vertex] (\name) at ($(v)+(\ang:.55)$) {};
      \draw (v)--(\name);
    }
    \node[note] at (0,-1.0) {$(3,0,\ldots,0,3)$};
  \end{scope}

  \begin{scope}[shift={(0,0)}]
    \node[note] at (0,.42) {finite exceptional caterpillars};
    \node[note] at (0,-.18) {see Table \ref{tab:zero-exceptions}};
  \end{scope}

  \begin{scope}[shift={(5.6,-0.05)}]
    \node[vertex] (c) at (0,0) {};
    \foreach \ang/\name in {90/a,210/b,330/d} {
      \node[vertex] (\name1) at (\ang:.62) {};
      \node[vertex] (\name2) at (\ang:1.18) {};
      \draw (c)--(\name1)--(\name2);
    }
    \node[note] at (0,-1.0) {$S_3^2$};
  \end{scope}
\end{tikzpicture}
\caption{Schematic structure of the zero level set. Its caterpillar part
consists of the stable family $(3,0,\ldots,0,3)$ together with the nine
finite exceptional parameters in Table \ref{tab:zero-exceptions}, while its
non-caterpillar part consists only of $S_3^2$.}
\label{fig:zero-layer}
\end{figure}

\FloatBarrier

\section{Infinite Trees}

The classification above is intrinsically finite. If $T$ is an infinite locally
finite tree, the matrix $R_T$ should be interpreted as an operator on edge
functions. When the degrees are uniformly bounded, it defines a bounded
self-adjoint operator on $\ell^2(E(T))$. More generally, one can still consider
the Rayleigh quotient on finitely supported edge functions and set
\[
  \Lambda_c(T)
  =
  \sup_{0\ne f\in C_c(E(T))}
  \frac{\langle f,R_Tf\rangle}{\langle f,f\rangle}.
\]
Here \(C_c(E(T))\) denotes the space of finitely supported real-valued
functions on the edge set \(E(T)\).
For bounded-degree trees, $\Lambda_c(T)=\sup\sigma(R_T)$.

\begin{proposition}[Infinite trees]
Let $T$ be an infinite locally finite tree. Then
\[
  \Lambda_c(T)\ge0.
\]
In particular, if $R_T$ is a bounded self-adjoint operator on $\ell^2(E(T))$,
then $\sup\sigma(R_T)\ge0$.
\end{proposition}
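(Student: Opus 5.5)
The plan is to exhibit, for every $\varepsilon>0$, a finitely supported edge function on $T$ with Rayleigh quotient at least $-\varepsilon$. The test functions are cut-off, nearly constant functions along a long path in $T$. The tool is the vertex decomposition of Section 3,
\[
  \langle f,R_Tf\rangle=\sum_{v}\frac1{d_v}\bigl(S_v(f)^2-2A_v(f)\bigr).
\]
It holds verbatim for finitely supported $f$ on an infinite locally finite tree, because all sums are finite and $d_v$ is the degree of $v$ in $T$.

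\emph{Step 1 (a ray).} Since $T$ is infinite, connected and locally finite, König's lemma gives a ray $v_0v_1v_2\cdots$ in $T$ with distinct vertices. Write $e_i=v_{i-1}v_i$. For $N\ge1$ consider the path of $2N$ edges $e_1,\ldots,e_{2N}$.

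\emph{Step 2 (local computation).} Let $f$ be supported on $\{e_1,\ldots,e_{2N}\}$, with values $f_i=f(e_i)$, and set $f_0=f_{2N+1}=0$.
\begin{itemize}
  \item At an interior vertex $v_i$, $1\le i\le 2N-1$, exactly two edges in the support meet, so $S^2-2A=-(f_i-f_{i+1})^2$. The contribution is $-(f_i-f_{i+1})^2/d_{v_i}$, which is at least $-(f_i-f_{i+1})^2/2$ because $d_{v_i}\ge2$.
  \item At the end vertices $v_0$ and $v_{2N}$, only one edge of the support meets the vertex. The contribution is $-f_1^2/d_{v_0}\ge -f_1^2$, and similarly $-f_{2N}^2/d_{v_{2N}}\ge -f_{2N}^2$.
  \item Every other vertex meets no edge of the support and contributes nothing.
\end{itemize}
Adding up,
\[
  \langle f,R_Tf\rangle\ \ge\ -\sum_{i=0}^{2N}(f_i-f_{i+1})^2 .
\]
This is a lower bound by a discrete Dirichlet energy, uniform in the unknown degrees.

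\emph{Step 3 (tent function).} Choose $f_i=\min(i,\,2N+1-i)$ for $1\le i\le 2N$. Every increment $f_{i+1}-f_i$, including the two boundary ones, has absolute value at most $1$, so the energy is at most $2N+1$. On the other hand, $\langle f,f\rangle=\sum f_i^2\asymp N^3$. Hence the Rayleigh quotient is at least $-(2N+1)/(cN^3)\to0$ as $N\to\infty$, and therefore $\Lambda_c(T)\ge0$. For bounded degree, $R_T$ is bounded and self-adjoint, and $\sup\sigma(R_T)$ equals the supremum of the Rayleigh quotient over the dense subspace $C_c(E(T))$, which gives the second assertion.

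The only step I expect to need care is Step 2: one must verify that the decomposition formula really localizes with the degrees of $T$ rather than those of the path. One must also check that the unbounded degrees off the ray cannot hurt. The sign observation handles this: the only terms involving edges off the ray vanish, and every nonzero term is nonpositive with coefficient $1/d_v\le 1$ or $\le 1/2$, so larger degrees only help. Everything else is the standard "cut-off of a constant" argument.
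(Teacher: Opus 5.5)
Your proof is correct and is essentially the paper's argument: a ray in $T$, the vertex decomposition with the degrees of $T$, and test functions supported on a long segment of the ray. The only difference is the cut-off profile. The paper takes the plain indicator $f_N=\mathbf{1}_{\{e_1,\ldots,e_N\}}$, for which every interior vertex contributes exactly $0$ and the two endpoints give $\langle f_N,R_Tf_N\rangle\ge-2$ against $\|f_N\|^2=N$, so your tent function, which only improves the rate from $O(N^{-1})$ to $O(N^{-2})$, is not needed.
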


\begin{proof}
Every infinite locally finite tree contains a one-sided infinite simple
path
\[
  e_1,e_2,e_3,\ldots .
\]
For $N\ge1$, define $f_N(e_j)=1$ for $1\le j\le N$ and $f_N(e)=0$ otherwise.
By the vertex decomposition, every interior vertex of the selected path segment
has exactly two incident selected edges, and therefore its local contribution is
\[
  \frac1{d_v}(2^2-2\cdot2)=0.
\]
Only the two endpoints of the segment contribute, and each endpoint contribution
is bounded below by $-1$. Hence
\[
  \langle f_N,R_Tf_N\rangle\ge -2,\qquad
  \|f_N\|^2=N.
\]
Therefore the Rayleigh quotients of $f_N$ tend to $0$ from below, and
$\Lambda_c(T)\ge0$.
\end{proof}

\begin{remark}[Infinite trees]
The proposition shows that no infinite locally finite tree has a negative
spectral top in this sense.
For example, the infinite path lies in the zero level set: its spectral top is $0$.
Finite positive-curvature trees can converge to the zero level set when the
spine order tends to infinity; the stable finite family
$(3,0,\ldots,0,3)$ is one such manifestation. The usual finite statement that a
tree in the zero level set becomes positive after attaching a leaf uses a positive Perron
eigenvector and does not directly apply to the edge of the essential spectrum
on an infinite path. The half-line endpoint models
$(a,0,0,\ldots)$ therefore require a separate one-dimensional analysis.
\end{remark}

\section*{Acknowledgements}

The author is grateful to Bobo Hua for his support, and to Shuliang Bai for
helpful discussions and suggestions.

\appendix
\section{Minimal Nonnegative Boundary}\label{app:min-boundary}

The following table records the minimal nonnegative boundary
\[
  \partial_+\mathcal N_m
\]
for
\[
  3\le m\le12.
\]
Together with Table \ref{tab:maximal-negative}, it provides the exact finite
data needed for the short-spine classification. Appendix
\ref{app:zero-vectors} then isolates the zero-eigenvalue entries from this
boundary and records representative positive null vectors. A machine-readable
version of this boundary table is included in the repository described in
Appendix \ref{app:boundary-check}.

\begin{center}
\scriptsize
\begin{longtable}{c p{.82\linewidth}}
\toprule
$m$&Minimal nonnegative boundary\\
\midrule
3&
$(1,1,3)$, $(1,2,2)$, $(2,1,2)$, $(1,4,1)$, $(3,0,3)$,
$(2,0,5)$, $(1,0,9)$\\
4&
$(1,0,1,2)$, $(1,1,1,1)$, $(1,0,3,1)$, $(1,1,0,3)$,
$(1,2,0,2)$, $(3,0,0,3)$, $(1,0,0,6)$, $(2,0,0,5)$\\
5&
$(1,0,0,1,2)$, $(1,0,0,2,1)$, $(1,0,1,0,2)$, $(1,0,1,1,1)$,
$(1,0,2,0,1)$, $(1,1,0,1,1)$, $(1,1,0,0,3)$, $(1,0,0,0,5)$,
$(2,0,0,0,4)$, $(3,0,0,0,3)$\\
6&
$(1,0,0,0,1,2)$, $(1,0,0,0,2,1)$, $(1,0,0,1,0,2)$, $(1,0,0,1,1,1)$,
$(1,0,0,2,0,1)$, $(1,0,1,0,0,2)$, $(1,0,1,0,1,1)$, $(1,0,1,1,0,1)$,
$(1,1,0,0,1,1)$, $(1,1,0,0,0,3)$, $(1,0,0,0,0,5)$, $(2,0,0,0,0,4)$,
$(3,0,0,0,0,3)$\\
7&
$(1,0,0,0,1,0,1)$, $(1,0,0,1,0,0,1)$, $(1,0,0,0,0,1,2)$, $(1,0,0,0,0,2,1)$,
$(1,1,0,0,0,1,1)$, $(1,1,0,0,0,0,3)$, $(1,0,0,0,0,0,5)$, $(2,0,0,0,0,0,4)$,
$(3,0,0,0,0,0,3)$\\
8&
$(1,0,0,0,0,1,0,1)$, $(1,0,0,0,1,0,0,1)$, $(1,0,0,0,0,0,1,2)$,
$(1,0,0,0,0,0,2,1)$, $(1,1,0,0,0,0,1,1)$, $(1,0,0,0,0,0,0,4)$,
$(1,1,0,0,0,0,0,3)$, $(3,0,0,0,0,0,0,3)$\\
9&
\begin{tabular}[t]{@{}l@{}}
$(1,0,0,0,0,0,1,0,1)$, $(1,0,0,0,0,1,0,0,1)$, $(1,0,0,0,1,0,0,0,1)$,\\
$(1,0,0,0,0,0,0,1,2)$, $(1,0,0,0,0,0,0,2,1)$, $(1,1,0,0,0,0,0,0,2)$,\\
$(1,1,0,0,0,0,0,1,1)$, $(1,0,0,0,0,0,0,0,4)$, $(3,0,0,0,0,0,0,0,3)$
\end{tabular}\\
10&
\begin{tabular}[t]{@{}l@{}}
$(1,0,0,0,0,0,0,1,0,1)$, $(1,0,0,0,0,0,1,0,0,1)$, $(1,0,0,0,0,1,0,0,0,1)$,\\
$(1,0,0,0,1,0,0,0,0,1)$, $(1,0,0,0,0,0,0,0,1,2)$, $(1,0,0,0,0,0,0,0,2,1)$,\\
$(1,1,0,0,0,0,0,0,0,2)$, $(1,1,0,0,0,0,0,0,1,1)$, $(1,0,0,0,0,0,0,0,0,4)$,\\
$(3,0,0,0,0,0,0,0,0,3)$
\end{tabular}\\
11&
\begin{tabular}[t]{@{}l@{}}
$(1,0,0,0,0,0,0,0,1,0,1)$, $(1,0,0,0,0,0,0,1,0,0,1)$, $(1,0,0,0,0,0,1,0,0,0,1)$,\\
$(1,0,0,0,0,1,0,0,0,0,1)$, $(1,0,0,0,1,0,0,0,0,0,1)$, $(1,0,0,0,0,0,0,0,0,1,2)$,\\
$(1,0,0,0,0,0,0,0,0,2,1)$, $(1,1,0,0,0,0,0,0,0,0,2)$, $(1,1,0,0,0,0,0,0,0,1,1)$,\\
$(1,0,0,0,0,0,0,0,0,0,4)$, $(3,0,0,0,0,0,0,0,0,0,3)$
\end{tabular}\\
12&
\begin{tabular}[t]{@{}l@{}}
$(1,0,0,0,0,0,0,0,0,0,1,1)$, $(1,0,0,0,0,0,0,0,0,1,0,1)$,\\
$(1,0,0,0,0,0,0,0,1,0,0,1)$, $(1,0,0,0,0,0,0,1,0,0,0,1)$,\\
$(1,0,0,0,0,0,1,0,0,0,0,1)$, $(1,0,0,0,0,0,0,0,0,0,0,4)$,\\
$(3,0,0,0,0,0,0,0,0,0,0,3)$
\end{tabular}\\
\bottomrule
\end{longtable}
\end{center}

\section{Zero Diagrams and Zero Vectors}

The nine finite caterpillar zero exceptions from
Table \ref{tab:zero-exceptions} are displayed below. In each diagram, the
horizontal path is the spine, and the pendant edges are attached at the
indicated spine vertices.

\begin{figure}[htbp]
\centering
\begin{tikzpicture}[vertex/.style={circle,fill=black,inner sep=1.25pt}]
  \begin{scope}[shift={(-4.9,4.2)}]
    \drawsmallcat{$T_2(2,5)$}{2}{2,5}
  \end{scope}
  \begin{scope}[shift={(-.8,4.2)}]
    \drawsmallcat{$T_3(1,4,1)$}{3}{1,4,1}
  \end{scope}
  \begin{scope}[shift={(3.2,4.2)}]
    \drawsmallcat{$T_3(1,0,9)$}{3}{1,0,9}
  \end{scope}

  \begin{scope}[shift={(-5.2,2.0)}]
    \drawsmallcat{$T_4(1,0,0,6)$}{4}{1,0,0,6}
  \end{scope}
  \begin{scope}[shift={(-.8,2.0)}]
    \drawsmallcat{$T_5(2,0,0,0,4)$}{5}{2,0,0,0,4}
  \end{scope}
  \begin{scope}[shift={(3.8,2.0)}]
    \drawsmallcat{$T_5(1,0,0,0,5)$}{5}{1,0,0,0,5}
  \end{scope}

  \begin{scope}[shift={(-5.6,-.35)}]
    \drawsmallcat{$T_8(1,0,\ldots,0,4)$}{8}{1,0,0,0,0,0,0,4}
  \end{scope}
  \begin{scope}[shift={(-.8,-.35)}]
    \drawsmallcat{$T_9(1,1,0,\ldots,0,2)$}{9}{1,1,0,0,0,0,0,0,2}
  \end{scope}
  \begin{scope}[shift={(4.4,-.35)}]
    \drawsmallcat{$T_{12}(1,0,\ldots,0,1,1)$}{12}{1,0,0,0,0,0,0,0,0,0,1,1}
  \end{scope}
\end{tikzpicture}
\caption{The nine finite caterpillar zero exceptions from
Table \ref{tab:zero-exceptions}.}
\label{fig:finite-zero-caterpillars}
\end{figure}

\subsection{Representative Zero Vectors}\label{app:zero-vectors}

We use the following edge order: first list the spine edges
\[
  s_j=v_jv_{j+1},\qquad 1\le j\le m-1,
\]
and then list the pendant edges grouped by the spine vertices
$v_1,\ldots,v_m$.The following positive integer vectors satisfy
\[
  R_{T_m(a)}w=0.
\]
The first eight are listed in the table; the last one is displayed separately
below for readability.
By Perron--Frobenius, this zero eigenvalue is the largest eigenvalue.
For example, in the row $(1,4,1)$, the two spine edges both have weight $6$,
the single leaf at each endpoint has weight $2$, and the four leaves at the
middle spine vertex all have weight $3$.

\begin{center}
\footnotesize
\begin{tabular}{p{.27\linewidth} p{.23\linewidth} p{.38\linewidth}}
\toprule
$a$ & spine-edge weights & pendant-edge weights, grouped by spine vertices\\
\midrule
$(2,5)$ & $(3)$ & $(1,1)\mid(1,1,1,1,1)$\\
$(1,4,1)$ & $(6,6)$ & $(2)\mid(3,3,3,3)\mid(2)$\\
$(1,0,9)$ & $(9,15)$ & $(3)\mid(5,5,5,5,5,5,5,5,5)$\\
$(1,0,0,6)$ & $(9,15,21)$ & $(3)\mid(7,7,7,7,7,7)$\\
$(1,0,0,0,5)$ & $(3,5,7,9)$ & $(1)\mid(3,3,3,3,3)$\\
$(2,0,0,0,4)$ & $(9,11,13,15)$ & $(3,3)\mid(5,5,5,5)$\\
$(1,0,0,0,0,0,0,4)$ & $(3,5,7,9,11,13,15)$ & $(1)\mid(5,5,5,5)$\\
$(1,1,0,0,0,0,0,0,2)$ & $(15,21,19,17,15,13,11,9)$ & $(5)\mid(9)\mid(3,3)$\\
\bottomrule
\end{tabular}
\end{center}

For the last exceptional parameter
\[
  a=(1,0,0,0,0,0,0,0,0,0,1,1),
\]
a positive zero vector is given by spine-edge weights
\[
  (3,5,7,9,11,13,15,17,19,21,15)
\]
and pendant-edge weights
\[
  (1)\mid(9)\mid(5).
\]
The full list of exact zero-vector certificates, in machine-readable form, is
also included in the repository described in Appendix
\ref{app:boundary-check}.

\section{Reproducibility of the Short-Spine Boundary Computation}
\label{app:boundary-check}

This appendix records the exhaustive computer check used for the finite
short-spine classification. It is not an additional classification statement;
it is a reproducibility record for the tables in Section~5.

The verification code and output files are available in the public repository
\begin{center}
\url{https://github.com/coker412/rt-tree-experiments}
\end{center}
The tables in this manuscript were generated from the repository state
identified by the commit
\begin{center}
\texttt{b929c84fd32ac36df789ac7467c42357c0944cfe}.
\end{center}

The exact verification is implemented in
\path{code/exact_short_spine_verification.py}. For each \(3\le m\le12\), it
enumerates canonical caterpillar parameters, computes the sign of
\(\lambda_{\max}(R_{T_m(a)})\) using exact rational characteristic polynomials
and Sturm root counts, and writes the negative downset, the maximal negative
elements, and the first nonnegative boundary to \path{data/exact_short_spine_verification/}.
The tables in Section~5 and Appendix~\ref{app:min-boundary} are read directly
from these output files.

The script \path{code/validate_paper_examples.py} checks the explicit examples
and zero-vector certificates appearing in the paper. Its output is recorded in \path{data/paper_validation/paper_validation.csv}.

The coverage is exhaustive for two reasons. First, every canonical caterpillar
parameter with fixed \(3\le m\le12\) is obtained from
\[
  (1,0,\ldots,0,1)
\]
by successively adding \(1\) to coordinates and then re-canonicalizing up to
reversal. Second, the negative region is downward closed by
Proposition~\ref{prop:upper-closed}. Hence the enumeration of all negative
children produces the full negative downset and its first nonnegative boundary.

After the full negative downsets have been generated, the zero candidates are
obtained by taking all one-step children of the maximal negative elements in
Table~\ref{tab:maximal-negative}. Every short-spine zero caterpillar lies on
this first nonnegative boundary: otherwise it would have a nonnegative
immediate predecessor, and Corollary~\ref{cor:strict-zero} would force it to
have positive spectral top. The exact Sturm verification then identifies
precisely the zero cases listed in Table~\ref{tab:zero-exceptions}.

\end{document}